\title{Pivotal decompositions of functions}
\author{Jean-Luc Marichal}
\address{Mathematics Research Unit, FSTC, University of Luxembourg \\
6, rue Coudenhove-Kalergi, L-1359 Luxembourg, Luxembourg} \email{jean-luc.marichal[at]uni.lu }
\author{Bruno Teheux}
\address{Mathematics Research Unit, FSTC, University of Luxembourg \\
6, rue Coudenhove-Kalergi, L-1359 Luxembourg, Luxembourg} \email{bruno.teheux[at]uni.lu }
\date{April 28, 2014}
\begin{document}

\theoremstyle{plain}
\newtheorem{theorem}{Theorem}[section]
\newtheorem{lemma}[theorem]{Lemma}
\newtheorem{proposition}[theorem]{Proposition}
\newtheorem{corollary}[theorem]{Corollary}
\newtheorem{fact}[theorem]{Fact}
\newtheorem*{main}{Main Theorem}

\theoremstyle{definition}
\newtheorem{definition}[theorem]{Definition}
\newtheorem{example}[theorem]{Example}

\theoremstyle{remark}
\newtheorem*{conjecture}{Conjecture}
\newtheorem{remark}{Remark}
\newtheorem{claim}{Claim}

\newcommand{\N}{\mathbb{N}}                     
\newcommand{\R}{\mathbb{R}}                     
\newcommand{\med}{\mathrm{med}}
\newcommand{\umc}{\mathrm{UMC}}
\newcommand{\bfa}{\mathbf{a}}
\newcommand{\bfb}{\mathbf{b}}
\newcommand{\bfc}{\mathbf{c}}
\newcommand{\bfx}{\mathbf{x}}
\newcommand{\ctop}{\mathbf{1}}
\newcommand{\cbot}{\mathbf{0}}
\newcommand{\id}{\mathrm{id}}
\newcommand{\ess}{\mathrm{ess}{\,}}

\begin{abstract}
We extend the well-known Shannon decomposition of Boolean functions to more general classes of functions. Such decompositions, which we call pivotal decompositions, express the fact that every unary section of a function only depends upon its values at two given elements. Pivotal decompositions appear to hold for various function classes, such as the class of lattice polynomial functions or the class of multilinear polynomial functions. We also define function classes characterized by pivotal decompositions and function classes characterized by their unary members and investigate links between these two concepts.
\end{abstract}

\keywords{Shannon decomposition, pivotal decomposition, Boolean function, pseudo-Boolean function, switching theory, aggregation function.}

\subjclass[2010]{94C10}

\maketitle

\section{Introduction}

A remarkable (though immediate) property of Boolean functions is the so-called \emph{Shannon decomposition}, or \emph{Shannon expansion} (see \cite{Sha38}), also called
\emph{pivotal decomposition} \cite{BarPro75}. This property states that, for every Boolean function $f\colon\{0,1\}^n\to\{0,1\}$
and every $k\in [n]=\{1,\ldots,n\}$, the following decomposition formula holds:
\begin{equation}\label{eq:decompB}
f(\bfx) ~=~ x_k\, f(\bfx_k^1)+\overline{x}_k\, f(\bfx_k^0){\,},\qquad\bfx=(x_1,\ldots,x_n)\in\{0,1\}^n,
\end{equation}
where $\overline{x}_k=1-x_k$ and $\bfx_k^a$ is the $n$-tuple whose $i$-th coordinate is $a$, if
$i=k$, and $x_i$, otherwise. Here the `$+$' sign represents the classical addition for real numbers.

Decomposition formula (\ref{eq:decompB}) means that we can precompute the function values for $x_k=0$ and $x_k=1$ and then select the
appropriate value depending on the value of $x_k$. By analogy with the cofactor expansion formula for determinants, here $f(\bfx_k^1)$ (resp.\
$f(\bfx_k^0)$) is called the cofactor of $x_k$ (resp.\ $\overline{x}_k$) for $f$ and is derived by setting $x_k=1$ (resp.\ $x_k=0$) in $f$.

Clearly, the addition operation in (\ref{eq:decompB}) can be replaced with the maximum operation $\vee$, thus yielding the following alternative
formulation of (\ref{eq:decompB}):
$$
f(\bfx) ~=~ (x_k\, f(\bfx_k^1))\vee (\overline{x}_k\, f(\bfx_k^0)){\,},\qquad\bfx\in\{0,1\}^n,~k\in [n].
$$
Equivalently, (\ref{eq:decompB}) can also be put in the form
\begin{equation}\label{eq:decompB2}
f(\bfx) ~=~ (x_k\vee f(\bfx_k^0)){\,}(\overline{x}_k\vee f(\bfx_k^1)){\,},\qquad\bfx\in\{0,1\}^n,~k\in [n].
\end{equation}

As it is well known, repeated applications of (\ref{eq:decompB}) show that any $n$-ary Boolean function can always be expressed as the
multilinear polynomial function
\begin{equation}\label{eq:sum-prodBf}
f(\bfx) ~=~ \sum_{S\subseteq [n]} f(\mathbf{1}_S)\,\prod_{i\in S}x_i\,\prod_{i\in [n]\setminus S}\overline{x}_i{\,},\qquad\bfx\in\{0,1\}^n,
\end{equation}
where $\mathbf{1}_S$ is the characteristic vector of $S$ in $\{0,1\}^n$, that is, the $n$-tuple whose $i$-th coordinate is $1$, if $i\in S$, and
$0$, otherwise.

If $f$ is nondecreasing (i.e., the map $z\mapsto f(\bfx_k^z)$ is isotone for every $\bfx\in\{0,1\}^n$ and every $k\in [n]$), then by expanding (\ref{eq:decompB2}) we see that the decomposition formula reduces to
\begin{equation}\label{eq:decompBnd}
f(\bfx) ~=~ (x_k\, f(\bfx_k^1))\vee f(\bfx_k^0){\,},\qquad\bfx\in\{0,1\}^n,~k\in [n],
\end{equation}
or, equivalently,
\begin{equation}\label{eq:decompBnd2}
f(\bfx) ~=~ \med(x_k,f(\bfx_k^1),f(\bfx_k^0)){\,},\qquad\bfx\in\{0,1\}^n,~k\in [n],
\end{equation}
where $\med$ is the ternary median operation defined by
$$
\med(x,y,z) ~=~ (x\wedge y)\vee(y\wedge z)\vee(z\wedge x)
$$
and $\wedge$ is the minimum operation.

Interestingly, the following decomposition formula also holds for nondecreasing $n$-ary Boolean functions:
\begin{equation}\label{eq:decompBnd3}
f(\bfx) ~=~ x_k\,(f(\bfx_k^1)\vee f(\bfx_k^0))+\overline{x}_k\,(f(\bfx_k^1)\wedge f(\bfx_k^0)){\,},\qquad\bfx\in\{0,1\}^n,~k\in [n].
\end{equation}

Actually, any of the decomposition formulas (\ref{eq:decompBnd})--(\ref{eq:decompBnd3}) exactly expresses the fact that $f$ should be
nondecreasing and hence characterizes the subclass of nondecreasing $n$-ary Boolean functions. We state this result as follows.

\begin{proposition}
A Boolean function $f\colon\{0,1\}^n\to\{0,1\}$ is nondecreasing if and only if it satisfies any of the decomposition formulas
(\ref{eq:decompBnd})--(\ref{eq:decompBnd3}).
\end{proposition}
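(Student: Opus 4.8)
The plan is to reduce the statement to a single, easily verified pointwise inequality. Recall that $f$ is nondecreasing precisely when the map $z\mapsto f(\bfx_k^z)$ is isotone for every $\bfx\in\{0,1\}^n$ and every $k\in[n]$; since $z$ ranges over $\{0,1\}$, this amounts to the condition
\[
f(\bfx_k^0) ~\leq~ f(\bfx_k^1)\qquad\text{for all }\bfx\in\{0,1\}^n\text{ and }k\in[n],
\]
which I will call condition $(\ast)$. Writing $a=f(\bfx_k^1)$ and $b=f(\bfx_k^0)$ for fixed $\bfx$ and $k$, and noting that $a$ and $b$ depend only on the coordinates of $\bfx$ other than the $k$-th, I would prove that each of the three formulas (\ref{eq:decompBnd})--(\ref{eq:decompBnd3}) holds for all $\bfx$ and $k$ if and only if $(\ast)$ holds.

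For the harder (``if'') direction, suppose one of the formulas holds for every $\bfx$ and $k$. Fixing $k$ and an arbitrary assignment of the remaining coordinates, I would specialize to the tuple with $x_k=1$, so that the left-hand side becomes $f(\bfx_k^1)=a$. Substituting $x_k=1$ into (\ref{eq:decompBnd}) yields $a=(1\cdot a)\vee b=a\vee b$; into (\ref{eq:decompBnd2}) it yields $a=\med(1,a,b)=a\vee b$ after simplifying the median; and into (\ref{eq:decompBnd3}) it yields $a=1\cdot(a\vee b)+0\cdot(a\wedge b)=a\vee b$. In each case the identity $a=a\vee b$ is exactly $b\leq a$, that is, $(\ast)$. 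Since the remaining coordinates were arbitrary, $(\ast)$ holds for all $\bfx$ and $k$, so $f$ is nondecreasing.

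For the (``only if'') direction, assume $(\ast)$, so that $a\wedge b=b$ and $a\vee b=a$. A two-case check on $x_k\in\{0,1\}$ then settles all three formulas simultaneously: when $x_k=1$ every right-hand side collapses to $a\vee b=a=f(\bfx_k^1)$, and when $x_k=0$ every right-hand side collapses to $a\wedge b=b=f(\bfx_k^0)$, matching the left-hand side $f(\bfx)$ in both cases. Alternatively, one may start from the universally valid Shannon form (\ref{eq:decompB2}), expand the product to $(x_k\wedge a)\vee(\overline{x}_k\wedge b)\vee(a\wedge b)$, and invoke $(\ast)$ to absorb the last two terms into $b$, thereby recovering (\ref{eq:decompBnd}) directly. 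I do not anticipate a genuine obstacle: the only point that needs care is that the formulas must be verified, and the inequality extracted, for \emph{every} pair $(\bfx,k)$, and that the simplifications of $\med$ and of the arithmetic expression in (\ref{eq:decompBnd3}) at the Boolean values $0$ and $1$ be recorded explicitly so that the equivalences become transparent.
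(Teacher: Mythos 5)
Your proof is correct and amounts to the direct pointwise verification that the paper treats as immediate (it states the proposition without a separate proof, having just observed that each formula ``exactly expresses'' nondecreasingness). Your reduction to the inequality $f(\bfx_k^0)\leqslant f(\bfx_k^1)$ and the two-case check on $x_k\in\{0,1\}$ fill in precisely the details the paper omits, so the approach is essentially the same.
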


Decomposition property (\ref{eq:decompB}) also holds for functions $f\colon\{0,1\}^n\to\R$, called \emph{$n$-ary pseudo-Boolean functions}. As a
consequence, these functions also have the representation given in (\ref{eq:sum-prodBf}). Moreover, formula (\ref{eq:decompBnd3}) clearly
characterizes the subclass of nondecreasing $n$-ary pseudo-Boolean functions.

The \emph{multilinear extension} of a pseudo-Boolean function $f\colon\{0,1\}^n\to\R$ is the function $\hat f\colon [0,1]^n\to\R$
defined by (see Owen~\cite{Owe72,Owe88})
\begin{equation}\label{eq:MLE}
\hat f(\bfx) ~=~ \sum_{S\subseteq [n]} f(\mathbf{1}_S)\,\prod_{i\in S}x_i\,\prod_{i\in [n]\setminus S}(1-x_i){\,},\qquad\bfx\in [0,1]^n.
\end{equation}
Actually, a function is the multilinear extension of a pseudo-Boolean function if and only if it is a multilinear polynomial function, i.e., a polynomial function of degree $\leqslant 1$ in each variable. Thus defined, one can easily see that the class of multilinear polynomial functions can be characterized as follows.

\begin{proposition}\label{prop:MLE}
A function $f\colon [0,1]^n\to\R$ is a multilinear polynomial function if and only if it satisfies
\begin{equation}\label{eq:MLE2}
f(\bfx) ~=~ x_k\, f(\bfx_k^1)+(1-x_k)\, f(\bfx_k^0){\,},\qquad \bfx\in [0,1]^n,~k\in [n].
\end{equation}
\end{proposition}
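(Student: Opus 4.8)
The plan is to prove both implications, the forward one being a direct consequence of the definition and the converse being the substantive part.

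For the forward implication, suppose $f$ is a multilinear polynomial function. Fix $k\in[n]$ together with all coordinates other than $x_k$. Since $f$ has degree $\leqslant 1$ in $x_k$, the unary section $z\mapsto f(\bfx_k^z)$ is affine, say $z\mapsto\alpha+\beta z$, where $\alpha$ and $\beta$ depend only on the frozen coordinates. Evaluating at $z=0$ and $z=1$ gives $\alpha=f(\bfx_k^0)$ and $\alpha+\beta=f(\bfx_k^1)$, whence
$$
f(\bfx) ~=~ \alpha+\beta\,x_k ~=~ (1-x_k)\,f(\bfx_k^0)+x_k\,f(\bfx_k^1){\,},
$$
which is exactly (\ref{eq:MLE2}). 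As $k$ was arbitrary, $f$ satisfies the decomposition formula.

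For the converse, suppose $f$ satisfies (\ref{eq:MLE2}) for every $k\in[n]$ and every $\bfx\in[0,1]^n$. The idea is to iterate the decomposition over all $n$ coordinates, mimicking the way repeated application of (\ref{eq:decompB}) produces (\ref{eq:sum-prodBf}) in the Boolean case. Applying (\ref{eq:MLE2}) with $k=1$ writes $f$ as a weighted combination of its two cofactors $f(\bfx_1^0)$ and $f(\bfx_1^1)$; since the hypothesis holds for \emph{all} $\bfx$, it still applies to these cofactors when we freeze the first coordinate and expand the second. Iterating with $k=2,3,\ldots,n$, after $n$ steps every surviving cofactor has all coordinates frozen at $0$ or $1$, hence equals a value $f(\mathbf{1}_S)$ of $f$ at a vertex of the cube, while the accumulated coefficient of the term indexed by $S\subseteq[n]$ is $\prod_{i\in S}x_i\,\prod_{i\in[n]\setminus S}(1-x_i)$. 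This yields precisely the representation (\ref{eq:MLE}), which is manifestly a polynomial of degree $\leqslant 1$ in each variable, i.e.\ a multilinear polynomial function, as required.

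The step demanding the most care is the bookkeeping inside this iteration: one must verify that freezing a coordinate and then decomposing in another is consistent, i.e.\ that the cofactors produced at each stage are genuinely independent of the coordinates already expanded and still satisfy (\ref{eq:MLE2}) in the remaining ones. The clean way to handle this is induction on the number of expanded coordinates, maintaining the invariant that after expanding $x_1,\ldots,x_k$ the function is a sum over $\{0,1\}^k$ of the corresponding multilinear monomials in $x_1,\ldots,x_k$ times cofactors depending only on $x_{k+1},\ldots,x_n$. Once this invariant is established, the case $k=n$ delivers (\ref{eq:MLE}) and the proof is complete.
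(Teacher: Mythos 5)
Your proof is correct and is essentially the argument the paper leaves implicit (the paper only remarks that ``one can easily see'' the characterization): the forward direction follows from the section $z\mapsto f(\bfx_k^z)$ being affine, and the converse iterates (\ref{eq:MLE2}) over all coordinates to recover the multilinear-extension form (\ref{eq:MLE}). The bookkeeping invariant you describe for the iteration is sound, since the hypothesis holds for all $\bfx$ and hence applies to each cofactor with already-expanded coordinates frozen at $0$ or $1$.
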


Interestingly, Eq.~(\ref{eq:MLE2}) provides an immediate proof of the property
$$
\frac{\partial f(\bfx)}{\partial{x_k}} ~=~ f(\bfx_k^1)-f(\bfx_k^0),
$$
which holds for every multilinear polynomial function $f\colon [0,1]^n\to\R$.

As far as nondecreasing multilinear polynomial functions are concerned, we have the following characterization, which is a special case of Corollary~\ref{cor:q5w76e5}. Recall first that a multilinear polynomial function is nondecreasing if and only if so is its restriction to $\{0,1\}^n$ (i.e., its defining pseudo-Boolean function).

\begin{proposition}\label{prop:ND-MLE}
A function $f\colon [0,1]^n\to\R$ is a nondecreasing multilinear polynomial function if and only if it satisfies
\begin{equation}\label{eq:MLEnd}
f(\bfx) ~=~ x_k\,(f(\bfx_k^1)\vee f(\bfx_k^0))+\overline{x}_k\,(f(\bfx_k^1)\wedge f(\bfx_k^0)){\,},\qquad \bfx\in [0,1]^n,~k\in [n].
\end{equation}
\end{proposition}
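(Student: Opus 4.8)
The plan is to prove both implications by reducing the mixed formula~(\ref{eq:MLEnd}), which involves the lattice operations $\vee$ and $\wedge$, to the purely affine formula~(\ref{eq:MLE2}) of Proposition~\ref{prop:MLE}. The pivot of the whole argument is the observation that, once the relative order of the two cofactors is known to be $f(\bfx_k^1)\geqslant f(\bfx_k^0)$, one has $f(\bfx_k^1)\vee f(\bfx_k^0)=f(\bfx_k^1)$ and $f(\bfx_k^1)\wedge f(\bfx_k^0)=f(\bfx_k^0)$, so that the right-hand side of~(\ref{eq:MLEnd}) becomes exactly $x_k\,f(\bfx_k^1)+(1-x_k)\,f(\bfx_k^0)$, the right-hand side of~(\ref{eq:MLE2}). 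Thus everything hinges on establishing this order relation in each direction.

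For the necessity direction, I would start from a nondecreasing multilinear polynomial function $f$. Since $f$ is multilinear, each unary section $z\mapsto f(\bfx_k^z)$ is affine, equal to $z\,f(\bfx_k^1)+(1-z)\,f(\bfx_k^0)$ by Proposition~\ref{prop:MLE}, and its slope is $f(\bfx_k^1)-f(\bfx_k^0)$. Since $f$ is nondecreasing, this slope must be nonnegative, i.e.\ $f(\bfx_k^1)\geqslant f(\bfx_k^0)$, for every $\bfx\in[0,1]^n$ and every $k\in[n]$. By the pivot observation above, (\ref{eq:MLEnd}) then coincides with~(\ref{eq:MLE2}), which $f$ satisfies; hence $f$ satisfies~(\ref{eq:MLEnd}).

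For the sufficiency direction, I would assume~(\ref{eq:MLEnd}) and recover the same order relation by a substitution trick: evaluating~(\ref{eq:MLEnd}) at $x_k=0$ gives $f(\bfx_k^0)=f(\bfx_k^1)\wedge f(\bfx_k^0)$ (note that the cofactors $f(\bfx_k^1)$ and $f(\bfx_k^0)$ do not depend on $x_k$, so this substitution is legitimate), which is exactly $f(\bfx_k^1)\geqslant f(\bfx_k^0)$, for all $\bfx$ and $k$. With this inequality in hand, (\ref{eq:MLEnd}) again reduces to~(\ref{eq:MLE2}), so Proposition~\ref{prop:MLE} shows that $f$ is a multilinear polynomial function; and since every unary section is affine with nonnegative slope $f(\bfx_k^1)-f(\bfx_k^0)\geqslant 0$, the function $f$ is nondecreasing.

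I do not expect a genuine obstacle here: the only step that requires care is the substitution $x_k=0$ (or $x_k=1$) that forces the order of the cofactors, together with the accompanying remark that the cofactors are independent of $x_k$. Everything else is the routine collapse of $\vee$ and $\wedge$ into affine coefficients, followed by an appeal to Proposition~\ref{prop:MLE}.
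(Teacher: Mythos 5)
Your proof is correct, but it follows a genuinely different route from the paper's. The paper does not prove Proposition~\ref{prop:ND-MLE} directly: it presents it as a special case of Corollary~\ref{cor:q5w76e5}, which is itself obtained from the general machinery of Sections~3--4 (Theorem~\ref{thm:dsu_pivot} on the equivalence of UM-characterization and pivotal characterization, plus Proposition~\ref{prop:q5w76e5} applied with $g(u,v)=u\vee v$ and $h(u,v)=u\wedge v$), together with the remark that a multilinear polynomial function is nondecreasing iff its restriction to $\{0,1\}^n$ is. You instead give a short, self-contained argument: in the necessity direction you use that each unary section of a multilinear polynomial function is affine with slope $f(\bfx_k^1)-f(\bfx_k^0)$, so monotonicity forces $f(\bfx_k^1)\geqslant f(\bfx_k^0)$ and collapses $\vee$ and $\wedge$; in the sufficiency direction you recover the same inequality by evaluating~(\ref{eq:MLEnd}) at the point $\bfx_k^0$ (correctly noting that the cofactors are unchanged under this substitution), after which~(\ref{eq:MLEnd}) reduces to~(\ref{eq:MLE2}) and Proposition~\ref{prop:MLE} finishes the job. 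Both directions are sound. What your approach buys is an elementary proof depending only on Proposition~\ref{prop:MLE}, readable at the point in the paper where the statement appears; what the paper's approach buys is uniformity, since the same corollary simultaneously handles every pivotally characterized class of real-valued functions on $[0,1]^n$ with $R_{\Gamma_\Pi}=\R^2$, not just the multilinear one.
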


The decomposition formulas considered in this introduction share an interesting common feature, namely the fact that any variable, here denoted $x_k$ and called \emph{pivot}, can be pulled out of the function, reducing the evaluation of $f(\bfx)$ to the evaluation of a function of $x_k$, $f(\bfx_k^1)$, and $f(\bfx_k^0)$.\footnote{In applications, such as cooperative game theory or aggregation function theory, this means that, in a sense, one can isolate the marginal contribution of a factor (attribute, criterion, etc.) from the others.} This feature may be useful when for instance the values $f(\bfx_k^1)$ and $f(\bfx_k^0)$ are much easier to compute than that of $f(\bfx)$. In addition to this, such (pivotal) decompositions may facilitate inductive proofs and may lead to canonical forms such as (\ref{eq:sum-prodBf}).

In this paper we define a general concept of pivotal decomposition for various functions $f\colon X^n\to Y$, where $X$ and $Y$ are nonempty sets (Section 2). We also introduce function classes that are characterized by pivotal decompositions (Section 3) and function classes that are characterized by their unary members and investigate relationships between these concepts (Section 4). We also introduce a natural generalization of the concept of pivotal decomposition, namely componentwise pivotal decomposition (Section 5). We then end our paper by some concluding remarks (Section 6).

\section{Pivotal decompositions}

The examples presented in the introduction motivate the following definition. Let $X$ and $Y$ be nonempty sets and let $0$ and $1$ be two fixed elements of $X$. For every function $f\colon X^n\to Y$, define $R_f=\{(f(\bfx_k^1),f(\bfx_k^0)):\bfx\in X^n,~k\in [n]\}$. Throughout we assume that $n\geqslant 1$.

\begin{definition}\label{de:345}
We say that a function $f\colon X^n\to Y$ is \emph{pivotally decomposable} if there exist a subset $D$ of $X\times Y^2$ and a function $\Pi\colon D\to Y$, called \emph{pivotal function}, such that $D\supseteq X\times R_f$ and
\begin{equation}\label{eq:pivdecomp}
f(\bfx) ~=~ \Pi(x_k,f(\bfx_k^1),f(\bfx_k^0)){\,},\qquad \bfx\in X^n,~k\in [n].
\end{equation}
In this case, we say that $f$ is \emph{$\Pi$-decomposable}.
\end{definition}

From Definition~\ref{de:345} we immediately obtain the following two results.

\begin{fact}\label{fact:FcEq}
Let $f\colon X^n\to Y$ be a $\Pi$-decomposable function for some pivotal function $\Pi$. Then, for every $(u,v)\in R_f$, we have $\Pi(1,u,v)=u$ and $\Pi(0,u,v)=v$.
\end{fact}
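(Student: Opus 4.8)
The plan is to evaluate the pivotal decomposition formula (\ref{eq:pivdecomp}) at the two tuples that witness membership of $(u,v)$ in $R_f$. First I would fix $(u,v)\in R_f$ and choose, by definition of $R_f$, some $\bfx\in X^n$ and $k\in[n]$ with $u=f(\bfx_k^1)$ and $v=f(\bfx_k^0)$.

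The key observation is that overwriting the $k$-th coordinate is idempotent: since $\bfx_k^1$ already has its $k$-th entry equal to $1$, we have $(\bfx_k^1)_k^1=\bfx_k^1$ and $(\bfx_k^1)_k^0=\bfx_k^0$, and likewise $(\bfx_k^0)_k^1=\bfx_k^1$ and $(\bfx_k^0)_k^0=\bfx_k^0$. I would then apply (\ref{eq:pivdecomp}) with pivot $k$ to the tuple $\bfx_k^1$, whose $k$-th coordinate is $1$, to obtain
$$u=f(\bfx_k^1)=\Pi\bigl(1,f(\bfx_k^1),f(\bfx_k^0)\bigr)=\Pi(1,u,v){\,},$$
and similarly to the tuple $\bfx_k^0$, whose $k$-th coordinate is $0$, to obtain $v=f(\bfx_k^0)=\Pi(0,u,v)$.

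There is essentially no hard step here; the only point that needs care is that the triples $(1,u,v)$ and $(0,u,v)$ genuinely lie in the domain $D$ on which $\Pi$ is defined. This is guaranteed by the requirement $D\supseteq X\times R_f$ built into Definition~\ref{de:345}, since $0,1\in X$ and $(u,v)\in R_f$, so the expressions $\Pi(1,u,v)$ and $\Pi(0,u,v)$ are meaningful. The remaining work is just the coordinate bookkeeping for the nested substitutions, which is routine.
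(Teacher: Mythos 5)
Your proof is correct and is exactly the ``immediate from Definition~\ref{de:345}'' argument the paper has in mind: evaluate (\ref{eq:pivdecomp}) at the witnesses $\bfx_k^1$ and $\bfx_k^0$ and use the idempotence of overwriting the $k$-th coordinate. The remark about $D\supseteq X\times R_f$ guaranteeing that $\Pi(1,u,v)$ and $\Pi(0,u,v)$ are defined is a nice touch, but there is no substantive difference from the paper's (omitted) proof.
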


\begin{proposition}[Uniqueness of the pivotal function]\label{prop:Uniq}
If $f\colon X^n\to Y$ is $\Pi$- and $\Pi'$-decomposable for some pivotal functions $\Pi$ and $\Pi'$, then $\Pi$ and $\Pi'$ coincide on $X\times R_f$.
\end{proposition}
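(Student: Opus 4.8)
The plan is to show that $\Pi$ and $\Pi'$ agree at an arbitrary point $(x,u,v)\in X\times R_f$ by producing a single tuple $\bfb\in X^n$ together with an index $k$ at which both pivotal decompositions of $f$ feed the arguments $(x,u,v)$ into their respective pivotal functions. Since both formulas then return the common value $f(\bfb)$, the two pivotal functions must coincide at $(x,u,v)$, and since this point is arbitrary in $X\times R_f$, the conclusion follows.

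First I would fix $(x,u,v)\in X\times R_f$. By the definition of $R_f$, the pair $(u,v)$ arises from some tuple $\bfa\in X^n$ and some index $k\in [n]$; that is, $u=f(\bfa_k^1)$ and $v=f(\bfa_k^0)$. The key step is to modify $\bfa$ so that its $k$-th coordinate becomes $x$ while leaving the two cofactors untouched. To this end I would set $\bfb=\bfa_k^x$, so that $b_k=x$. Because overwriting the $k$-th coordinate twice retains only the last value, we have $\bfb_k^1=\bfa_k^1$ and $\bfb_k^0=\bfa_k^0$, whence $f(\bfb_k^1)=u$ and $f(\bfb_k^0)=v$. Note that $(b_k,f(\bfb_k^1),f(\bfb_k^0))=(x,u,v)\in X\times R_f$, so this triple lies in the domains of both $\Pi$ and $\Pi'$, as guaranteed by the containment $D\supseteq X\times R_f$ in Definition~\ref{de:345}.

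Finally I would apply each decomposition formula to $\bfb$ with pivot $k$. Since $f$ is $\Pi$-decomposable, Eq.~(\ref{eq:pivdecomp}) gives $f(\bfb)=\Pi(b_k,f(\bfb_k^1),f(\bfb_k^0))=\Pi(x,u,v)$; since $f$ is also $\Pi'$-decomposable, the identical computation gives $f(\bfb)=\Pi'(x,u,v)$. Comparing the two expressions yields $\Pi(x,u,v)=\Pi'(x,u,v)$, which is the desired equality.

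I do not anticipate a serious obstacle: the only delicate point is the verification that the substitution $\bfb=\bfa_k^x$ preserves both cofactors, which reduces to the elementary identity $(\bfa_k^x)_k^a=\bfa_k^a$ for $a\in\{0,1\}$. It is worth emphasizing that the argument relies only on the existence of \emph{some} representative $(\bfa,k)$ of $(u,v)$ in $R_f$, and that such a representative need not be unique; this is precisely why the conclusion is restricted to $X\times R_f$ rather than to all of the domain $D$, where $\Pi$ and $\Pi'$ may well differ on points of $X\times Y^2$ outside $X\times R_f$.
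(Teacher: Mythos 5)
Your proposal is correct and follows essentially the same route as the paper's proof: both evaluate $f$ at the modified tuple $\bfa_k^x$ (the paper's $f(\bfx_k^p)$) via each decomposition formula and compare. Your write-up merely makes explicit the identity $(\bfa_k^x)_k^a=\bfa_k^a$ that the paper uses implicitly.
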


\begin{proof}
Let $(p,u,v)\in X\times R_f$. By definition of $R_f$, there exist $\bfx\in X^n$ and $k\in [n]$ such that $(u,v)=(f(\bfx_k^1),f(\bfx_k^0))$. We then have
$$
\Pi'(p,u,v) ~=~ \Pi'(p,f(\bfx_k^1),f(\bfx_k^0)) ~=~ f(\bfx_k^p) ~=~ \Pi(p,f(\bfx_k^1),f(\bfx_k^0)) ~=~ \Pi(p,u,v),
$$
which completes the proof.
\end{proof}

\begin{example}\label{ex:BF}
Every Boolean function is $\Pi$-decomposable, where $\Pi\colon \{0,1\}^3\to \{0,1\}$ is the classical `if-then-else' connective defined by $\Pi(p,u,v)=(p\wedge u)\vee(\overline{p}\wedge v)$. If $f$ is nondecreasing, we can restrict $\Pi$ to $\{0,1\}\times\{(u,v)\in \{0,1\}^2:u\geqslant v\}$ or consider $\Pi'(p,u,v)=(p\wedge (u\vee v))\vee(\overline{p}\wedge (u\wedge v))$ on $\{0,1\}^3$.
\end{example}

\begin{example}\label{ex:MLE}
Every multilinear polynomial function $f\colon [0,1]^n\to\R$ is $\Pi$-decompo{\-}sable, where $\Pi\colon D\to\R$ is defined by $D=[0,1]\times\R^2$ and $\Pi(p,u,v)=p\, u+(1-p)\, v$. If $f$ is nondecreasing, we can restrict $\Pi$ to $[0,1]\times\{(u,v)\in \R^2:u\geqslant v\}$ or consider $\Pi'(p,u,v)=p\, (u\vee v)+(1-p)(u\wedge v)$ on $[0,1]^n$.
\end{example}

\begin{example}\label{ex:LP}
Let $X$ be a bounded distributive lattice, with $0$ and $1$ as bottom and top elements, respectively. A \emph{lattice polynomial function} on $X$ is a composition of projections, constant functions, and the fundamental lattice
operations $\wedge$ and $\vee$; see, e.g., \cite{CouMar1,CouMar2,CouMar0}. An $n$-ary function $f\colon X^n\to X$ is a lattice polynomial function if and only if it can be written
in the (disjunctive normal) form
$$
f(\bfx) ~=~ \bigvee_{S\subseteq [n]}f(\mathbf{1}_S)\wedge\bigwedge_{i\in S}x_i{\,},\qquad\bfx\in X^n.
$$
It is known \cite{CouMar0,Cou11,Mar09} that a function $f\colon X^n\to X$ is a lattice polynomial function if and only if it is $\Pi$-decomposable, where $\Pi\colon X^3\to X$ is defined by $\Pi(p,u,v)=\med(p,u,v)$.
\end{example}

\begin{example}\label{ex:QLP}
Let $X$ and $Y$ be bounded distributive lattices. We denote by $0$ and $1$ their bottom and top elements, respectively. A function $f\colon X^n\to Y$ is of the form $f=g\circ (\phi,\ldots,\phi)$, where $g\colon Y^n\to Y$ is a lattice polynomial function and $\phi\colon X\to Y$ is a unary function such that $\phi(x)=\med(\phi(x),\phi(1),\phi(0))$, if and only if it is $\Pi$-decomposable, where $\Pi\colon X\times Y^2\to Y$ is defined by $\Pi(p,u,v)=\med(f(p,\ldots,p),u,v)$; see \cite{CouMar10}.
\end{example}

\begin{example}\label{ex:tnorm}
A \emph{t-norm} is a binary function $T\colon [0,1]^2\to [0,1]$ that is symmetric, nondecreasing, associative, and such that $T(1,x)=x$ (see, e.g., \cite{SchSkl05}). Every t-norm $T\colon [0,1]^2\to [0,1]$ is $\Pi$-decomposable with $\Pi\colon [0,1]^3\to\R$ defined by $\Pi(p,u,v)=T(p,u)$.
\end{example}

\begin{example}
Consider a function $f\colon X^n\to Y$, a pivotal function $\Pi\colon X\times Y^2\to Y$, and one-to-one functions $\phi\colon X\to X$ and $\psi\colon Y\to Y$ such that $\phi(0)=0$ and $\phi(1)=1$. One can easily show that $f$ is $\Pi$-decomposable if and only if the function $f'=\psi\circ f\circ (\phi,\ldots,\phi)$ is $\Pi'$-decomposable, where $\Pi'=\psi\circ\Pi\circ (\phi,\psi^{-1},\psi^{-1})$. In particular, if $Y=X$ and $\psi=\phi^{-1}$, we obtain $\Pi'=\phi^{-1}\circ\Pi\circ (\phi,\phi,\phi)$. For instance, \emph{quasi-linear functions} $f\colon \R^n\to\R$, defined by (see, e.g., \cite{Acz66})
$$
f(\bfx) ~=~ \phi^{-1}\bigg(\sum_{i=1}^n a_i\,\phi(x_i) + b\bigg),
$$
where $a_1,\ldots,a_n,b\in\R$, are pivotally decomposable.
\end{example}

Repeated applications of (\ref{eq:pivdecomp}) lead to the following fact.

\begin{fact}\label{fact:Restr01}
Let $\Pi$ be a pivotal function. Any $\Pi$-decomposable function $f\colon X^n\to Y$ is uniquely determined by $\Pi$ and the restriction of $f$ to $\{0,1\}^n$.
\end{fact}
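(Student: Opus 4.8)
The plan is to reduce an arbitrary evaluation $f(\bfx)$ to evaluations of $f$ at tuples in $\{0,1\}^n$ by repeatedly applying the decomposition formula \eqref{eq:pivdecomp}, while tracking the fact that each such reduction uses only the fixed pivotal function $\Pi$. Concretely, it suffices to prove that two $\Pi$-decomposable functions $f,g\colon X^n\to Y$ that agree on $\{0,1\}^n$ must agree on all of $X^n$; equivalently, that the value $f(\bfx)$ is computable from $\Pi$ alone together with the restriction of $f$ to $\{0,1\}^n$.

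For the induction I would measure a tuple $\bfx\in X^n$ by the number $w(\bfx)=|\{i\in [n]:x_i\notin\{0,1\}\}|$ of coordinates lying outside $\{0,1\}$, and argue by induction on $w(\bfx)$. The base case $w(\bfx)=0$ is exactly $\bfx\in\{0,1\}^n$, where $f(\bfx)$ is given by hypothesis. For the inductive step, suppose $w(\bfx)=m>0$ and pick any index $k$ with $x_k\notin\{0,1\}$. Applying \eqref{eq:pivdecomp} at the pivot $k$ gives $f(\bfx)=\Pi(x_k,f(\bfx_k^1),f(\bfx_k^0))$, and both $\bfx_k^1$ and $\bfx_k^0$ satisfy $w(\bfx_k^1)=w(\bfx_k^0)=m-1$, since replacing the non-binary coordinate $x_k$ by $1$ or $0$ removes exactly one coordinate lying outside $\{0,1\}$ and alters no other. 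Hence the induction hypothesis applies to $f(\bfx_k^1)$ and $f(\bfx_k^0)$, and as $\Pi$ is fixed the value $f(\bfx)$ is determined. Running the same computation for $g$, which uses the same $\Pi$ and agrees with $f$ on $\{0,1\}^n$, yields $f(\bfx)=g(\bfx)$.

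A point to check carefully, though not a genuine obstacle, is that each application of \eqref{eq:pivdecomp} stays within the domain of $\Pi$: this is guaranteed by the requirement $D\supseteq X\times R_f$ in Definition~\ref{de:345}, since $(x_k,f(\bfx_k^1),f(\bfx_k^0))\in X\times R_f$ by the very definition of $R_f$. I would also observe that the argument does not require the reduction to be independent of the choice of pivots made along the way: we are only asserting that the value is determined, so any single descending chain of reductions suffices, and no coherence between different chains need be established. Overall the statement is a direct consequence of iterating \eqref{eq:pivdecomp}, the only real content being the choice of the termination measure $w$, which makes the induction bottom out precisely at $\{0,1\}^n$.
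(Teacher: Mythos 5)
Your proof is correct and is exactly the argument the paper has in mind: the paper states this Fact with only the remark that it follows from ``repeated applications of (\ref{eq:pivdecomp})'', and your induction on the number of coordinates outside $\{0,1\}$ is the natural way to make that iteration precise and terminate at $\{0,1\}^n$. The two side remarks (that the needed triples lie in $X\times R_f\subseteq D$, and that no coherence between different pivot choices is required) are both accurate.
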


A \emph{section} of $f\colon X^n\to Y$ is a function which can be obtained from $f$ by replacing some of its variables by constants. Formally, for every $S\subseteq [n]$ and every $\bfa\in X^n$, we define the $S$-section $f_S^{\bfa}\colon X^S\to Y$ of $f$ by $f_S^{\bfa}(\bfx)=f(\bfa_S^{\bfx})$, where $\bfa_S^{\bfx}$ is the $n$-tuple whose $i$-th coordinate is $x_i$, if
$i\in S$, and $a_i$, otherwise. We also denote $f_{\{k\}}^{\bfa}$ by $f_k^{\bfa}$.

\begin{fact}\label{rem:bij}
Eq.~(\ref{eq:pivdecomp}) implies that, for every fixed $\bfa,\bfb\in X^n$ and $k\in [n]$, we have $f_k^{\bfa}=f_k^{\bfb}$ if and only if $(f(\bfa_k^1),f(\bfa_k^0))=(f(\bfb_k^1),f(\bfb_k^0))$.
\end{fact}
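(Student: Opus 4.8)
The plan is to prove both directions by reducing each unary section $f_k^{\bfa}$ to an expression involving only the pivotal function $\Pi$ and the two cofactor values $f(\bfa_k^1)$ and $f(\bfa_k^0)$. The one piece of bookkeeping on which everything rests is the observation that the $k$-th coordinate of $\bfa_k^x$ is $x$, so that overwriting this coordinate by $1$ or $0$ recovers $\bfa_k^1$ or $\bfa_k^0$ regardless of $x$; formally, $(\bfa_k^x)_k^1=\bfa_k^1$ and $(\bfa_k^x)_k^0=\bfa_k^0$.

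The ``only if'' direction is immediate from the definition of a section and does not even require the decomposition. If $f_k^{\bfa}=f_k^{\bfb}$, these unary functions agree at every point of $X$, in particular at the two distinguished elements $1$ and $0$. Since $f_k^{\bfa}(1)=f(\bfa_k^1)$ and $f_k^{\bfa}(0)=f(\bfa_k^0)$, and similarly for $\bfb$, this yields at once $(f(\bfa_k^1),f(\bfa_k^0))=(f(\bfb_k^1),f(\bfb_k^0))$.

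For the ``if'' direction I would fix an arbitrary $x\in X$ and apply the pivotal decomposition~(\ref{eq:pivdecomp}) to the point $\bfx=\bfa_k^x$ with pivot index $k$. Using the coordinate identities above, this gives $f(\bfa_k^x)=\Pi(x,f(\bfa_k^1),f(\bfa_k^0))$, and likewise $f(\bfb_k^x)=\Pi(x,f(\bfb_k^1),f(\bfb_k^0))$. Assuming the two cofactor pairs coincide, the right-hand sides are equal for every $x\in X$, whence $f_k^{\bfa}(x)=f_k^{\bfb}(x)$ for all $x$, i.e. $f_k^{\bfa}=f_k^{\bfb}$.

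I do not anticipate any genuine obstacle here: the statement is essentially a one-line unfolding of the definitions once the decomposition is applied at the points $\bfa_k^x$. The only step worth stating with care is the coordinate identity $(\bfa_k^x)_k^1=\bfa_k^1$, and neither Fact~\ref{fact:FcEq} nor Proposition~\ref{prop:Uniq} is needed for the argument.
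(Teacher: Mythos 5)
Your proof is correct and matches the argument the paper implicitly relies on: the Fact is stated without proof as an immediate consequence of Eq.~(\ref{eq:pivdecomp}), and your unfolding—evaluating the sections at $1$ and $0$ for the forward direction, and applying the decomposition at $\bfa_k^x$ via the identity $(\bfa_k^x)_k^1=\bfa_k^1$ for the converse—is exactly the intended one-line justification. No gaps.
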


\begin{fact}\label{fact:45j4h}
If a function $f\colon X^n\to Y$ is $\Pi$-decomposable for some pivotal function $\Pi$, then every section of $f$ is also $\Pi$-decomposable.
\end{fact}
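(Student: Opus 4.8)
The plan is to verify directly that the \emph{same} pivotal function $\Pi$ witnesses the decomposability of an arbitrary section; the only real content is a bookkeeping check that fixing the pivot commutes with taking the section. Fix $S\subseteq [n]$ and $\bfa\in X^n$, and write $g=f_S^{\bfa}\colon X^S\to Y$. I would take an arbitrary pivot $k\in S$ and an arbitrary $\bfx\in X^S$, and aim to establish
$$
g(\bfx) ~=~ \Pi(x_k,g(\bfx_k^1),g(\bfx_k^0)){\,}.
$$

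First I would unfold the definition of the section, $g(\bfx)=f(\bfa_S^{\bfx})$, and observe that since $k\in S$ the $k$-th coordinate of the $n$-tuple $\bfa_S^{\bfx}$ is exactly $x_k$. Applying the $\Pi$-decomposition (\ref{eq:pivdecomp}) of $f$ at coordinate $k$ to the argument $\bfa_S^{\bfx}$ then gives
$$
g(\bfx) ~=~ \Pi\bigl(x_k,\,f((\bfa_S^{\bfx})_k^1),\,f((\bfa_S^{\bfx})_k^0)\bigr){\,}.
$$

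The key step, which I expect to be the only point requiring care, is the identity $(\bfa_S^{\bfx})_k^a=\bfa_S^{\bfx_k^a}$ for $a\in\{0,1\}$. Both $n$-tuples carry $a$ in position $k$, carry $x_i$ in positions $i\in S\setminus\{k\}$, and carry $a_i$ in positions $i\notin S$, so they agree coordinatewise. Consequently $f((\bfa_S^{\bfx})_k^a)=f(\bfa_S^{\bfx_k^a})=g(\bfx_k^a)$, and substituting into the previous display yields the desired decomposition of $g$ at pivot $k$.

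Finally, it remains to check that $\Pi$ is defined wherever the decomposition of $g$ requires it, that is, that $D\supseteq X\times R_g$, where $R_g=\{(g(\bfx_k^1),g(\bfx_k^0)):\bfx\in X^S,~k\in S\}$ is the set associated with $g$. This follows from $R_g\subseteq R_f$: by the computation above, each pair $(g(\bfx_k^1),g(\bfx_k^0))$ equals $(f(\bfy_k^1),f(\bfy_k^0))$ with $\bfy=\bfa_S^{\bfx}\in X^n$ and $k\in [n]$, hence lies in $R_f$. Therefore $X\times R_g\subseteq X\times R_f\subseteq D$, and the same pivotal function $\Pi$ exhibits $g=f_S^{\bfa}$ as $\Pi$-decomposable, as desired.
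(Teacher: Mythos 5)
Your proof is correct, and it fills in exactly the verification the paper leaves implicit: Fact~\ref{fact:45j4h} is stated without proof as an immediate consequence of Definition~\ref{de:345}, and your argument (the commutation identity $(\bfa_S^{\bfx})_k^a=\bfa_S^{\bfx_k^a}$ plus the domain check $R_{f_S^{\bfa}}\subseteq R_f$) is the intended one. No gaps.
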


%
%

\begin{proposition}\label{prop:fd7ssdf}
A function $f\colon X^n\to Y$ is $\Pi$-decomposable for some pivotal function $\Pi$ if and only if so are its unary sections.
\end{proposition}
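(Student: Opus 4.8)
The plan is to prove the two implications separately, but first I would pin down the reading of the statement. It asserts the equivalence of two existential claims: that there is a single pivotal function $\Pi$ decomposing $f$, if and only if there is a single pivotal function $\Pi$ -- the same one for every section -- decomposing every unary section of $f$. This common-$\Pi$ reading is essential, because in isolation \emph{every} unary function $g\colon X\to Y$ is trivially pivotally decomposable: its set $R_g=\{(g(1),g(0))\}$ is a single pair, so one may simply put $\Pi(x,g(1),g(0))=g(x)$. Were the sections allowed to carry unrelated pivotal functions, the hypothesis would be vacuous and the statement false. With this reading, the forward implication is immediate from Fact~\ref{fact:45j4h}: if $f$ is $\Pi$-decomposable, then every section of $f$, and in particular every unary section, is $\Pi$-decomposable for the very same $\Pi$.

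For the converse I would assume that a single pivotal function $\Pi$ decomposes every unary section of $f$, fix $\bfx\in X^n$ and $k\in[n]$, and examine the unary section $f_k^{\bfx}$. The $n=1$ instance of (\ref{eq:pivdecomp}) for this section reads $f_k^{\bfx}(x)=\Pi(x,f_k^{\bfx}(1),f_k^{\bfx}(0))$ for all $x\in X$. The key maneuver is to evaluate this identity at the pivot's own value $x=x_k$: since $f_k^{\bfx}(x_k)=f(\bfx)$, $f_k^{\bfx}(1)=f(\bfx_k^1)$, and $f_k^{\bfx}(0)=f(\bfx_k^0)$, it collapses precisely to $f(\bfx)=\Pi(x_k,f(\bfx_k^1),f(\bfx_k^0))$, which is (\ref{eq:pivdecomp}) for $f$. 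As $\bfx$ and $k$ were arbitrary, $f$ is $\Pi$-decomposable. The only remaining check is the domain condition of Definition~\ref{de:345}, namely $D\supseteq X\times R_f$; this holds because $D\supseteq X\times R_{f_k^{\bfx}}$ for each section, while $R_f=\bigcup_{\bfx,k}R_{f_k^{\bfx}}$ by the very definition of $R_f$.

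I expect the main difficulty to be conceptual rather than computational: recognizing that the converse can hold only when one $\Pi$ is shared across all unary sections, and seeing that this shared $\Pi$ is exactly what lets the evaluation at $x=x_k$ transport a one-variable decomposition back to the full function. Once the common-$\Pi$ reading is fixed, no genuinely hard step remains -- the argument reduces to the observation that the pivotal decomposition of $f$ at coordinate $k$ is nothing but the decomposition of the unary section $f_k^{\bfx}$ read at its own pivot.
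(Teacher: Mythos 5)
Your proof is correct and follows essentially the same route as the paper's: necessity via Fact~\ref{fact:45j4h}, and sufficiency by evaluating the unary section's decomposition $f_k^{\bfx}(x)=\Pi(x,f_k^{\bfx}(1),f_k^{\bfx}(0))$ at the pivot's own value $x=x_k$ to recover $f(\bfx)=\Pi(x_k,f(\bfx_k^1),f(\bfx_k^0))$. Your explicit remarks on the shared-$\Pi$ reading and on the domain condition via $R_f=\bigcup_{\bfx,k}R_{f_k^{\bfx}}$ are sound and make explicit what the paper's terser proof leaves implicit.
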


\begin{proof}
(Necessity) Follows from Fact~\ref{fact:45j4h}.

(Sufficiency) Let $f\colon X^n\to Y$ be $\Pi$-decomposable. For every $\bfx\in X^n$ and every $k\in [n]$, we then have
$$
f(\bfx) ~=~ f(\bfx_k^{x_k}) ~=~ f_k^{\bfx}(x_k) ~=~ \Pi(x_k,f_k^{\bfx}(1),f_k^{\bfx}(0)) ~=~ \Pi(x_k,f(\bfx_k^1),f(\bfx_k^0)),
$$
which completes the proof.
\end{proof}

\section{Pivotally characterized classes of functions}

The examples given in the previous sections motivate the consideration of function classes that are characterized by given pivotal functions. The fact that any section of a pivotally decomposable function is also pivotally decomposable with the same pivotal function suggests considering classes of functions with unbounded arities.

The $k$-th argument of a function $f\colon X^n\to Y$ is said to be \emph{inessential} if $f_k^{\bfa}$ is constant for every $\bfa\in X^n$ (see \cite{Sal63}). Otherwise, it is said to be \emph{essential}. We say that a unary section $f_k^{\bfa}$ of $f$ is \emph{essential} if the $k$-th argument of $f$ is essential.

It is natural to ask that a function class characterized by a pivotal function be closed under permuting arguments or adding, deleting, or identifying inessential arguments of functions. We then consider the following definition.

For every function $f\colon X^n \to Y$ and every map $\sigma\colon [n] \rightarrow [m]$, define the function $f_\sigma\colon X^m \rightarrow Y$ by $f_{\sigma}(\bfa)=f(\bfa\sigma)$, where $\bfa\sigma$ denotes the $n$-tuple $(a_{\sigma(1)},\ldots,a_{\sigma(n)})$. Define also the set $U = \bigcup_{n\geqslant 1}Y^{X^n}$.

\begin{definition}
Define an equivalence relation on $U$ as follows. For functions $f\colon X^n\to Y$ and $g\colon X^m\to Y$, we say that $f$ and $g$ are \emph{equivalent} and we write $f\equiv g$ if $f$ can be obtained from $g$ by permuting arguments or by adding, deleting, or identifying inessential arguments. Formally, we have $f\equiv g$ if there exist maps $\sigma\colon [m]\to [n]$ and $\mu\colon [n]\to [m]$ such that $f=g_{\sigma}$ and $g=f_{\mu}$.
\end{definition}

Note that if $f\equiv g$, then $f$ and $g$ have the same number of essential arguments. Also, a nonconstant function is always equivalent to a function with no inessential argument.

An $S$-section of $f\colon X^n\to Y$ is said to be \emph{essential} if there exists $\bfb\in X^n$ such that $f_S^{\bfb}$ is nonconstant.

\begin{lemma}\label{lemma:d4fds}
Let $f, g \in U$.
\begin{enumerate}
\item[(i)] If $f\equiv g$, then any section of $f$ is equivalent to a section of $g$.
\item[(ii)] If every section of $f$ is equivalent to a section of $g$ and if every section of $g$ is equivalent to a section of $f$, then $f\equiv g$.
\item[(iii)] If $f$ and $g$ are nonconstant functions, if every essential section of $f$ is equivalent to a section of $g$, and if every essential section of $g$ is equivalent to a section of $f$, then $f \equiv g$.
\end{enumerate}
\end{lemma}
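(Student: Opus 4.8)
The plan is to extract first a structural fact about the relabelling maps witnessing an equivalence, and then to run an essential-argument count for the two converse statements. Throughout, write $E(h)\subseteq[k]$ for the set of essential arguments of a function $h\colon X^k\to Y$. For (i), suppose $f\equiv g$, say $f=g_\sigma$ and $g=f_\mu$ with $\sigma\colon[m]\to[n]$ and $\mu\colon[n]\to[m]$. Given a section $f_S^{\bfa}$, I would set $T=\sigma^{-1}(S)$ and $\bfb=\bfa\sigma$, and verify by a direct substitution that $f_S^{\bfa}=(g_T^{\bfb})_{\sigma|_T}$, where $\sigma|_T\colon T\to S$. This already supplies one of the two relabelling maps needed to witness $f_S^{\bfa}\equiv g_T^{\bfb}$; the task is then to produce a reverse map $\beta\colon S\to T$ with $g_T^{\bfb}=(f_S^{\bfa})_\beta$. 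The obstacle here is that $\sigma|_T$ need not be injective, so one cannot simply invert it. The remedy, which I regard as the technical heart of the lemma, is to show first that $\sigma$ and $\mu$ restrict to bijections between $E(g)$ and $E(f)$: this follows from $f=f_{\sigma\circ\mu}$ and $g=g_{\mu\circ\sigma}$ together with the elementary observation that an argument $i$ of a composite $g_\sigma$ can be essential only if $\sigma$ sends some argument of $E(g)$ onto $i$; a cardinality count then forces $\sigma(E(g))=E(f)$ with $\sigma|_{E(g)}$ injective, and symmetrically for $\mu$. Granting this, I would define $\beta$ to send each $i\in S\cap E(f)$ to the unique $l\in E(g)$ with $\sigma(l)=i$ (which automatically lies in $T$) and to send the remaining, inessential indices of $S$ arbitrarily into $T$; checking $g_T^{\bfb}=(f_S^{\bfa})_\beta$ then reduces to noting that the two $g$-tuples being compared agree on $E(g)$ and differ only on inessential coordinates of $g$.

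For (ii), the key remark is that $f$ is itself a section of $f$ (take $S=[n]$), so the hypothesis yields a section $g_T^{\bfb}$ of $g$ with $f\equiv g_T^{\bfb}$, and symmetrically a section of $f$ equivalent to $g$. Since equivalent functions have equally many essential arguments, and since the essential arguments of any section $g_T^{\bfb}$ form a subset of $E(g)$, I obtain $|E(f)|=|E(g_T^{\bfb})|\leqslant|E(g)|$ and, symmetrically, $|E(g)|\leqslant|E(f)|$; hence all these cardinalities coincide. Consequently $E(g_T^{\bfb})=E(g)$, so in particular $E(g)\subseteq T$ and every coordinate of $g$ outside $T$ is inessential. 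It then remains to observe that fixing inessential coordinates yields an equivalent function, i.e.\ $g_T^{\bfb}\equiv g$, which I would justify by exhibiting the inclusion $T\hookrightarrow[m]$ and a retraction $[m]\to T$ (identity on $T$) as the two witnessing maps. Transitivity of $\equiv$ then gives $f\equiv g_T^{\bfb}\equiv g$; the degenerate case of constant functions is handled directly, since equivalent constants share their value.

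For (iii), the argument of (ii) goes through almost verbatim once one notes that, as $f$ and $g$ are nonconstant, the full sections $f_{[n]}^{\bfa}=f$ and $g_{[m]}^{\bfb}=g$ are essential; thus the weaker hypothesis, bearing only on essential sections, still applies to $f$ and $g$ themselves, and the same essential-argument count together with transitivity concludes $f\equiv g$. I expect the only genuine difficulty in the whole lemma to lie in part (i), specifically in the essential-argument bijection underlying the construction of the reverse relabelling map $\beta$; parts (ii) and (iii) are then bookkeeping around the essential-argument count and the remark that fixing inessential coordinates preserves the equivalence class.
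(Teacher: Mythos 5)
Your proof is correct and follows essentially the same route as the paper: part (i) by exhibiting restricted relabelling maps witnessing the equivalence of sections, and parts (ii) and (iii) by taking $f$ and $g$ as sections of themselves and running the essential-argument count $\ess f=\ess g_T^{\bfb}\leqslant\ess g=\ess f_S^{\bfa}\leqslant\ess f$, then observing that a section with the same number of essential arguments fixes only inessential coordinates and is therefore equivalent to the whole function. The only divergence is in the reverse map of (i): the paper simply restricts the given map $\mu$ (asserting $g_{\sigma^{-1}(S)}^{\bfa\sigma}=(f_S^{\bfa})_\mu$) rather than building a new map $\beta$ from the essential-argument bijection, so the inversion of $\sigma$ that you single out as the technical heart is a valid but avoidable detour --- the definition of $\equiv$ already supplies a second witnessing map and does not require the two maps to be mutually inverse.
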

\begin{proof}
Assume that $f\colon X^n \to Y$ and $g\colon X^m \to Y.$

(i) Let $\sigma\colon [m]\rightarrow [n]$ and $\mu\colon [n]\rightarrow [m]$ such that $f=g_\sigma$ and $g=f_\mu$. If $S$ is a nonempty subset of $[n]$ and if $\bfa \in X^n$, it is easy to prove that $f_{S}^{\bfa}=(g_{\sigma^{-1}(S)}^{\bfa\sigma})_{\sigma}$ and that $g_{\sigma^{-1}(S)}^{\bfa\sigma}=(f_S^{\bfa})_\mu$.

(ii) For $h \in U$, let us denote by $\ess h$ the number of essential arguments of $h$. Let $S\subseteq [n]$, $T\subseteq [m]$, $\bfa \in X^n$, $\bfb \in X^m$ such that $g\equiv f_S^{\bfa}$ and $f\equiv g_T^{\bfb}$. It follows that
\[
\ess f=\ess  g_T^{\bfb} \leqslant \ess g = \ess  f_S^{\bfa} \leqslant \ess f.
\]
We conclude that $\ess g=\ess g_T^{\bfb}$ and so that $g\equiv g_T^{\bfb}\equiv f$.

(iii) Since $f$ and $g$ are nonconstant functions, they are their own essential sections and we can complete the proof as in (ii).
\end{proof}

\begin{definition}
Let $\Pi\colon D\to Y$ be a pivotal function. We denote by $\Gamma_{\Pi}$ the subclass of $U$ of functions which are equivalent to $\Pi$-decomposable functions with no essential argument or no inessential argument. We say that a class $C\subseteq U$ is \emph{pivotally characterized} if there exists a pivotal function $\Pi$ such that $C=\Gamma_\Pi$. In that case, we say that $C$ is \emph{$\Pi$-characterized}.
\end{definition}

\begin{proposition}\label{prop:pivotal_unary}
Let $\Pi$ be a pivotal function.
\begin{enumerate}
\item[(i)] A nonconstant function is in $\Gamma_{\Pi}$ if and only if so are its essential unary sections.
\item[(ii)] A constant function $c$ is in $\Gamma_{\Pi}$ if and only if $\Pi(p,c,c)=c$ for every $p\in X$.
\end{enumerate}
\end{proposition}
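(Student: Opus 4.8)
The plan is to prove (ii) first and then to reduce (i) to the unary characterization of Proposition~\ref{prop:fd7ssdf}. For (ii), any constant function equivalent to $c$ is again constant and hence has no essential argument, so $c\in\Gamma_\Pi$ holds if and only if some constant $g\equiv c$ is $\Pi$-decomposable; for such a $g$ one has $R_g=\{(c,c)\}$, so (\ref{eq:pivdecomp}) reduces to $c=\Pi(x_k,c,c)$ for every $x_k\in X$, while conversely this single identity also supplies the domain condition $D\supseteq X\times R_g$ and makes the unary constant $c$ itself $\Pi$-decomposable. This yields the auxiliary fact I would use throughout (i): a unary function $u\colon X\to Y$ lies in $\Gamma_\Pi$ if and only if it is $\Pi$-decomposable. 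Indeed, the constant case is exactly (ii), and if $u$ is nonconstant then any witness $g\equiv u$ is nonconstant, hence has no inessential argument, hence (having a single essential argument) is unary and equal to $u$; the converse is immediate since a nonconstant unary $\Pi$-decomposable function has no inessential argument.

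For (i) I would pass from the nonconstant $f$ to a function $\tilde f$ obtained by deleting its inessential arguments, so that $\tilde f\equiv f$ and $\tilde f$ has no inessential argument. Two observations then do the work. First, $f\in\Gamma_\Pi$ if and only if $\tilde f$ is $\Pi$-decomposable: the ``if'' part is immediate from the definition of $\Gamma_\Pi$ (take $\tilde f$ as witness), and for ``only if'' a witness $g\equiv f$ is nonconstant, hence has no inessential argument, so $g$ and $\tilde f$ are equivalent with all arguments essential; by the observation that equivalent functions have the same number of essential arguments they then have equal arity and differ only by a permutation of arguments, and since (\ref{eq:pivdecomp}) is required for every index $k$, $\Pi$-decomposability is invariant under such permutations. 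Second, the essential unary sections of $f$ are exactly the unary sections of $\tilde f$: an essential section $f_k^{\bfa}$ is unchanged when $\bfa$ is modified at an inessential coordinate, so it depends only on the essential coordinates of $\bfa$ and thus coincides, under the identification of essential coordinates, with a unary section $\tilde f_k^{\bfc}$ of $\tilde f$.

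Combining these, the auxiliary fact gives that the unary sections of $\tilde f$ all lie in $\Gamma_\Pi$ if and only if they are all $\Pi$-decomposable, which by Proposition~\ref{prop:fd7ssdf} holds if and only if $\tilde f$ is $\Pi$-decomposable, i.e.\ (by the first observation) if and only if $f\in\Gamma_\Pi$; and by the second observation this left-hand condition is precisely that every essential unary section of $f$ lies in $\Gamma_\Pi$. I expect the main obstacle to be the bookkeeping in the first observation, namely justifying through the count of essential arguments that equivalent functions with no inessential argument are related by a permutation and that $\Pi$-decomposability is permutation-invariant, together with ensuring that the constant case of the unary fact matches (ii) exactly, so that constant essential sections (which occur whenever an essential argument admits a constant section) are handled uniformly; the identification in the second observation and the final chain of equivalences are then routine.
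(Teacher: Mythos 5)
Your proof is correct and follows essentially the same route as the paper's: both arguments reduce to the function obtained from $f$ by deleting its inessential arguments, apply Proposition~\ref{prop:fd7ssdf} to its unary sections, and identify those unary sections with the essential unary sections of $f$ (with (ii) handled as a direct computation on $R_g=\{(c,c)\}$). The only cosmetic differences are that you package the unary case as a standalone equivalence (a unary $u$ is in $\Gamma_{\Pi}$ iff it is $\Pi$-decomposable) and transfer $\Pi$-decomposability from the witness to the reduced function via permutation invariance, where the paper instead invokes Lemma~\ref{lemma:d4fds} and Fact~\ref{fact:45j4h}.
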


\begin{proof}
Assertion (ii) is trivial. Let us prove assertion (i).

(Necessity) Suppose that the nonconstant function $f\colon X^n\to Y$ is in $\Gamma_{\Pi}$. Then $f$ is equivalent to a $\Pi$-decomposable function $g\colon X^m\to Y$ with no inessential argument. Let $\bfa\in X^n$ and $k\in [n]$ such that $f^{\bfa}_k$ is an essential unary section of $f$. By Lemma~\ref{lemma:d4fds} (ii), $f^{\bfa}_k$ is equivalent
to a section $h$ of $g$, which is $\Pi$-decomposable by Fact~\ref{fact:45j4h}. In turn, function $h$ is equivalent to a $\Pi$-decomposable function with no essential or inessential argument.

(Sufficiency) Let us suppose that every essential unary section of a nonconstant function $f\colon X^n\to Y$ is equivalent to a $\Pi$-decomposable function and let us prove that $f$ is also equivalent to a $\Pi$-decomposable function.

Let $\bfa\in X^n$ and let $k\in [n]$ be such that the $k$-th argument of $f$ is essential. Then the essential unary section $f^{\bfa}_k$ is equivalent to a $\Pi$-decomposable function $g\colon X^m\to Y$ with at most one essential argument. Hence, there is a map $\mu\colon [1]\to [m]$ such that $g(\mathbf{c})=f^{\bfa}_k(\mathbf{c}\mu)$ for every $\mathbf{c}\in X^m$. For any fixed $\mathbf{c}\in X^m$ and every $p\in X$, we then have
$$
f^{\bfa}_k(p) ~=~ g(\bfc_{\mu(1)}^p) ~=~ \Pi(p,g(\bfc_{\mu(1)}^1),g(\bfc_{\mu(1)}^0)) ~=~ \Pi(p,f^{\bfa}_k(1),f^{\bfa}_k(0)),
$$
which shows that every essential unary section of $f$ is $\Pi$-decomposable.

Now, let $E\subseteq [n]$ be the nonempty set of labels of essential arguments of $f$ and let $h\colon X^E\to Y$ be the function obtained from $f$ by deleting its inessential arguments.\footnote{The function $h$ can be defined formally as follows. Let $\sigma\colon [n]\to E$ be any extension to $[n]$ of the map $\iota\colon E\to [n]:k\mapsto k$. Then $h$ is defined by $h(\bfa)=f(\bfa\sigma)$ for every $\bfa\in X^E$. Since $f(\bfb)=h(\bfb\iota)$ for every $\bfb\in X^n$, the functions $f$ and $h$ are equivalent.} Thus, $h$ is equivalent to $f$ and has no inessential arguments. Moreover, its unary sections are essential unary sections of $f$ and hence are $\Pi$-decomposable. By Proposition~\ref{prop:fd7ssdf} the function $h$ is also $\Pi$-decomposable, which completes the proof.
\end{proof}

\begin{example}
\begin{enumerate}
\item[(a)] The class of Boolean functions is $\Pi$-characterized, where $\Pi\colon \{0,1\}^3\to \{0,1\}$ is defined by $\Pi(p,u,v)=(p\wedge u)\vee(\overline{p}\wedge v)$.

\item[(b)] The class of nondecreasing Boolean functions is $\Pi$-characterized, where $\Pi\colon \{0,1\}^3\to \{0,1\}$ is defined by $\Pi(p,u,v)=(p\wedge (u\vee v))\vee(\overline{p}\wedge (u\wedge v))$.

\item[(c)] The class of multilinear polynomial functions is $\Pi$-characterized, where $\Pi\colon [0,1]^3\to\R$ is defined by $\Pi(p,u,v)=p\, u+(1-p)\, v$.

\item[(d)] The class of nondecreasing multilinear polynomial functions is $\Pi$-characterized, where $\Pi\colon [0,1]^3\to\R$ is defined by $\Pi(p,u,v)=p\, (u\vee v)+(1-p)(u\wedge v)$.

\item[(e)] The class of lattice polynomial functions on a bounded distributive lattice $X$ is $\Pi$-characterized, where $\Pi\colon X^3\to X$ is defined by $\Pi(p,u,v)=\med(p,u,v)$.
\end{enumerate}
\end{example}

\begin{example}
The subclass of $U=\bigcup_{n\geqslant 1}\R^{[0,1]^n}$ of functions that are equivalent to a function $g_{c,n}\colon [0,1]^n\to\R : \bfx\mapsto 1+c\,\prod_{i=1}^nx_i$, where $c\in\R$ and $n\in\N$, is a subclass of the class of multilinear polynomial functions which is $\Pi$-characterized, where $\Pi\colon D\to\R$ is the function $\Pi(p,u,v)=p\, u+(1-p)\, v$ defined on $D=[0,1]\times\R\times\{1\}$. Equivalently, we can consider $\Pi'(p,u,v)=p\, u+(1-p)$ on $D'=[0,1]\times\R^2$.
\end{example}

\section{Classes characterized by their unary members}

Proposition \ref{prop:pivotal_unary} shows that a class $\Gamma_{\Pi}$ is characterized by its constant members and the essential unary sections of its members. This observation motivates the following definition, which is inspired from \cite{Cou}.

\begin{definition}\label{de:cu}
We say that a class $C\subseteq U$ is \emph{characterized by its unary members}, or is \emph{UM-characterized}, if it satisfies the following two conditions:
\begin{enumerate}
\item[(i)] A nonconstant function $f$ is in $C$ if and only if so are its essential unary sections.

\item[(ii)] If $f$ is a constant function in $C$ and $g\equiv f$, then $g$ is in $C$.
\end{enumerate}
Equivalently, conditions (i) and (ii) can be replaced by (i) and (ii'), where
\begin{enumerate}
\item[(ii')] If $f$ is in $C$ and $g\equiv f$, then $g$ is in $C$.
\end{enumerate}
We denote by $\umc$ the family of UM-characterized classes $C\subseteq U$.
\end{definition}

\begin{remark}
\begin{enumerate}
\item[(a)] The unary sections considered in condition (i) of Definition~\ref{de:cu} must be essential. Indeed, otherwise for instance the class of multilinear polynomial functions that are strictly increasing in each argument would be considered as a UM-characterized class. However, by adding an inessential argument to any member of this class, the resulting function would no longer be in the class.

\item[(b)] The terminology `unary members' is justified by the fact that the nonconstant unary members of a UM-characterized class $C$ are nothing other than essential unary sections of members of $C$, namely themselves.
\end{enumerate}
\end{remark}

Proposition~\ref{prop:pivotal_unary} shows that every pivotally characterized subclass of $U$ is UM-characterized. As a consequence, a subclass of $U$ that is not UM-characterized cannot be pivotally characterized. Note also that there are UM-characterized subclasses of $U$ that are not pivotally characterized. To give an example, the subclass of all nondecreasing functions in $U$ is UM-characterized but not pivotally characterized (see Example~\ref{ex:LE} for an instance of nondecreasing function with no inessential argument that is not pivotally decomposable).

\begin{example}
The \emph{(discrete) Sugeno integrals} on a bounded distributive lattice $X$ are those lattice polynomial functions on $X$ (see Example~\ref{ex:LP}) which are reflexive (i.e., $f(x,\ldots,x)=x$ for all $x\in X$). Even though the class of lattice polynomial functions is pivotally characterized, the subclass of Sugeno integrals is not UM-characterized and hence cannot be pivotally characterized. Indeed, any unary function $f(x)=x\wedge c$, $c\in X$, is not a Sugeno integral but is an essential unary section of the binary Sugeno integral $g(x_1,x_2)=x_1\wedge x_2$.
\end{example}

The following lemma is an immediate consequence of Definition~\ref{de:cu}.

\begin{lemma}\label{lemma:Csig}
Let $C\subseteq U$ be a UM-characterized class and let $f\colon X^n\to Y$ ($n\geqslant 1$) be a function. Then the following assertions are equivalent.
\begin{enumerate}
\item[(i)] $f\in C$,

\item[(ii)] $f_{\sigma}\in C$ for every permutation $\sigma\colon [n]\to [n]$,

\item[(iii)] every essential section of $f$ is in $C$.
\end{enumerate}
\end{lemma}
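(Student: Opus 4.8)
The plan is to establish the two equivalences (i)$\Leftrightarrow$(ii) and (i)$\Leftrightarrow$(iii), using only the two defining properties of a UM-characterized class: that a nonconstant function belongs to $C$ if and only if its essential unary sections do (Definition~\ref{de:cu}(i)), and that $C$ is closed under $\equiv$ (Definition~\ref{de:cu}(ii$'$)). If $f$ is constant then $f_\sigma=f$ for every permutation $\sigma$, so (i) and (ii) are identical; since a constant function has no essential section, the genuine content of the lemma lies in the nonconstant case, to which I restrict attention from now on. For (i)$\Leftrightarrow$(ii), the implication (ii)$\Rightarrow$(i) is obtained by taking $\sigma=\id_{[n]}$, for which $f_\sigma=f$, while (i)$\Rightarrow$(ii) follows from $f_\sigma\equiv f$ (permuting arguments is one of the moves defining $\equiv$) together with closure of $C$ under $\equiv$.

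For (iii)$\Rightarrow$(i), I would use that a nonconstant $f$ is its own essential section: the $[n]$-section satisfies $f_{[n]}^{\bfa}=f$ for every $\bfa$, and it is essential precisely because $f$ is nonconstant, exactly as in the proof of Lemma~\ref{lemma:d4fds}(iii). Thus $f$ itself is among the essential sections of $f$, and (iii) gives $f\in C$ at once.

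The substance of the statement is the implication (i)$\Rightarrow$(iii). Assuming $f\in C$, Definition~\ref{de:cu}(i) tells us that every essential unary section of $f$ lies in $C$. Given an essential section $f_S^{\bfb}$, I would first record the bookkeeping identity $(f_S^{\bfb})_k^{\bfc}=f_k^{\mathbf{d}}$, where $\mathbf{d}$ agrees with $\bfb$ outside $S$ and with $\bfc$ on $S$, and observe that essentiality of the $k$-th argument of $f_S^{\bfb}$ forces essentiality of the $k$-th argument of $f$. Consequently every essential unary section of $f_S^{\bfb}$ is an essential unary section of $f$ and hence lies in $C$. When $f_S^{\bfb}$ is nonconstant, Definition~\ref{de:cu}(i) applied to it then yields $f_S^{\bfb}\in C$.

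The delicate point---and the step I expect to be the main obstacle---is a constant essential section $f_S^{\bfb}\equiv v$, because Definition~\ref{de:cu}(i) only speaks about nonconstant functions. The idea is to catch such a constant as a constant essential unary section of $f$ itself: since the $S$-section is essential, there is a nonconstant $f_S^{\bfc}$, and any coordinate $k\in S$ essential in $f_S^{\bfc}$ is essential in $f$; fixing such a $k$, the section $f_k^{\bfb}$ is obtained by restricting the already constant $f_S^{\bfb}$, so it is the constant $v$ while being an essential unary section of $f$, whence $f_k^{\bfb}\in C$. Finally $f_S^{\bfb}\equiv f_k^{\bfb}$, both being the constant $v$ and differing only by inessential arguments, so closure of $C$ under $\equiv$ gives $f_S^{\bfb}\in C$, completing the argument.
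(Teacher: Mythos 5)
The paper offers no argument at all here: it simply declares the lemma ``an immediate consequence of Definition~4.1.'' Your write-up supplies the two ingredients that actually make it work and that the paper leaves tacit, namely the bookkeeping identity $(f_S^{\bfb})_k^{\bfc}=f_k^{\bfd}$ with $\bfd=\bfb_S^{\bfc}$ (which shows that essential unary sections of an $S$-section are essential unary sections of $f$ itself), and the treatment of a \emph{constant} essential $S$-section, which Definition~4.1(i) cannot reach directly and which you correctly route through a constant essential \emph{unary} section of $f$ followed by closure under $\equiv$. Both steps check out, and (i)$\Leftrightarrow$(ii) and (iii)$\Rightarrow$(i) for nonconstant $f$ are handled correctly.

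The one point you should state more honestly is the constant case of (iii)$\Rightarrow$(i). You dismiss it by saying the ``genuine content'' lies elsewhere, but under the paper's definition a constant function has \emph{no} essential sections, so (iii) is vacuously true while (i) can fail: take $C=\varnothing$ (which is UM-characterized; it is class (1) in the appendix) or $C=\ctop/{\equiv}$ and $f$ the constant $0$ function. So the implication is not merely contentless there---it is false as literally stated, and your restriction to nonconstant $f$ is a necessary amendment to the lemma, not a stylistic choice. You should say so explicitly (or adopt the convention that $f$ is always counted among the sections in (iii)). This is harmless for the paper, which only ever invokes the directions (i)$\Rightarrow$(ii) and (i)$\Rightarrow$(iii), but a proof claiming the three-way equivalence for all $f$ cannot exist.
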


We now prove that a subclass of a pivotally characterized class is UM-characterized if and only if it is pivotally characterized (Theorem~\ref{thm:dsu_pivot}). This result will follow from both Proposition~\ref{prop:pivotal_unary} and the following proposition.

For every pivotal function $\Pi$, every $C\subseteq \Gamma_{\Pi}$, every integer $n\geqslant 1$,  and every $k\in [n]$, we set
$$
R_C^{n,k} ~=~ \{(f(\bfx_k^1),f(\bfx_k^0)):\bfx\in X^n, ~f\in C~\mbox{with arity $n$}\}
$$
and we denote by $\Pi_C^{n,k}$ the restriction of $\Pi$ to $X\times R_C^{n,k}$. To simplify the notation we also set $R_C=R_C^{1,1}$ and $\Pi_C=\Pi_C^{1,1}$.

\begin{proposition}
Let $\Pi$ be a pivotal function and consider a UM-characterized subclass $C$ of $\Gamma_{\Pi}$. Then, for every integer $n\geqslant 1$ and every $k\in [n]$, we have $R_C^{n,k}=R_C=\bigcup_{f\in C}R_f$ and $\Pi_C^{n,k}=\Pi_C$. Moreover, $C=\Gamma_{\Pi_C}$.
\end{proposition}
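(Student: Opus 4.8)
The plan is to establish the three assertions in the order in which they reduce to one another: first the set equality $R_C^{n,k}=R_C$ for all $n\geqslant 1$ and $k\in[n]$, from which $\Pi_C^{n,k}=\Pi_C$ is immediate (both are the restriction of $\Pi$ to one and the same set $X\times R_C$) and $R_C=\bigcup_{f\in C}R_f$ follows by taking the union over all arities and pivots; and only then the class equality $C=\Gamma_{\Pi_C}$. For the easy inclusion $R_C\subseteq R_C^{n,k}$ I would start from a pair $(u,v)=(h(1),h(0))$ with $h\in C$ unary, adjoin $n-1$ inessential arguments to $h$ while placing its essential argument in position $k$, and observe that the resulting $n$-ary function is equivalent to $h$, hence lies in $C$ by the closure property (ii$'$) of a UM-characterized class; evaluating its pivotal pair at position $k$ returns $(u,v)$.

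For the reverse inclusion $R_C^{n,k}\subseteq R_C$ I would take $(u,v)=(f(\bfx_k^1),f(\bfx_k^0))$ with $f\in C$ of arity $n$ and examine the unary section $f_k^{\bfx}$, for which $(f_k^{\bfx}(1),f_k^{\bfx}(0))=(u,v)$ holds by definition of the section. If the $k$-th argument of $f$ is essential, then $f$ is nonconstant and $f_k^{\bfx}$ is an essential unary section, so $f_k^{\bfx}\in C$ by condition (i) of the definition of a UM-characterized class, whence $(u,v)\in R_C$.

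The main obstacle is the complementary case, in which the $k$-th argument of $f$ is inessential: then $u=v=:w$ and $f_k^{\bfx}$ is the constant $w$, which is not an essential section, so condition (i) no longer applies directly. Here I would argue that the constant $w$ nevertheless belongs to $C$ (equivalently $(w,w)\in R_C$): deleting all inessential arguments of $f$ yields an equivalent function $\hat f$, which is again in $C$ by closure and has no inessential argument, so by $C\subseteq\Gamma_\Pi$ it is $\Pi$-decomposable and still takes the value $w$. The presence of a genuinely inessential argument then forces, through Proposition~\ref{prop:pivotal_unary}(ii), the idempotency $\Pi(p,w,w)=w$, placing the constant $w$ in $\Gamma_\Pi$, and the closure of $C$ under equivalence of its constant members is used to conclude that the constant $w$ lies in $C$. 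I expect this step --- pinning down that the value carried by an inessential pivot is realized by a constant member of $C$ --- to be the delicate point of the whole argument.

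Finally, with $R_C^{n,k}=R_C$ in hand I would prove $C=\Gamma_{\Pi_C}$ by double inclusion. For $C\subseteq\Gamma_{\Pi_C}$, a nonconstant $f\in C$ is reduced, by deleting inessential arguments, to an equivalent $\hat f\in C$ with no inessential argument; each of its unary sections is then an essential unary section, lies in $C$, and contributes a pair to $R_C$, so $R_{\hat f}\subseteq R_C$ and the decomposition of $\hat f$ already uses only the restriction $\Pi_C$. Thus $\hat f$ is $\Pi_C$-decomposable with no inessential argument and $f\equiv\hat f$ gives $f\in\Gamma_{\Pi_C}$, constants being handled by Proposition~\ref{prop:pivotal_unary}(ii). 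For $\Gamma_{\Pi_C}\subseteq C$, a nonconstant generator $g$ with no inessential argument is $\Pi_C$-decomposable, so $R_g\subseteq R_C$; each essential unary section $g_j^{\bfa}$ is $\Pi$-decomposable by Fact~\ref{fact:45j4h}, and its pivotal pair lies in $R_C$, so there is a unary member of $C$ sharing that pair; since both functions are $\Pi$-decomposable, the decomposition formula~(\ref{eq:pivdecomp}) forces them to coincide, whence $g_j^{\bfa}\in C$. Condition (i) of UM-characterization then yields $g\in C$, and closure under equivalence gives $f\in C$, the constant case once more resting on Proposition~\ref{prop:pivotal_unary}(ii).
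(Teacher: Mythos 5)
Your reverse inclusion $R_C^{n,k}\subseteq R_C$ breaks down at exactly the point you flag as delicate, and the argument you sketch there does not close the gap. When the $k$-th argument of $f$ is inessential and $u=v=w$, you conclude that the constant $w$ lies in $\Gamma_\Pi$ (via Proposition~\ref{prop:pivotal_unary}(ii)) and then invoke ``the closure of $C$ under equivalence of its constant members'' to place it in $C$. That closure property (condition (ii) of Definition~\ref{de:cu}) only transports constants that are \emph{already} members of $C$ along the equivalence relation; membership of the constant $w$ in the ambient class $\Gamma_\Pi$ gives no information about membership in the subclass $C$. And no repair is possible, because the sub-claim is false as stated: take $X=Y=\{0,1\}$, $\Pi(p,u,v)=(p\wedge u)\vee(\overline{p}\wedge v)$ (so that $\Gamma_\Pi$ is the class of all Boolean functions) and $C=C_{\{\id\}}=\id/{\equiv}$, a UM-characterized subclass of $\Gamma_\Pi$ containing no constant function. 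The binary member $f(x_1,x_2)=x_1$ yields $(f(\bfx_2^1),f(\bfx_2^0))=(x_1,x_1)$, so $(0,0)$ and $(1,1)$ lie in $R_C^{2,2}$, whereas $R_C=R_C^{1,1}=\{(1,0)\}$. Thus the equalities $R_C^{n,k}=R_C=\bigcup_{f\in C}R_f$ fail for inessential pivots; they hold only if one restricts attention to pivots $k$ that are essential arguments of $f$ (equivalently, to members of $C$ with no inessential argument), which is the case your ``essential'' branch already handles correctly via condition (i).

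You should not feel too bad about this: the paper's own proof commits the same elision, hiding it in the parenthetical ``up to equivalence, we may assume that the $n$-th argument of $f$ is essential'' just before Eq.~(\ref{eq:EcnRb}), which silently discards the pairs $(w,w)$ produced by inessential pivots of nonconstant members. The part of the proposition that is actually used downstream, namely $C=\Gamma_{\Pi_C}$, survives because its proof only needs $R_g\subseteq R_C$ for functions $g\in C$ with \emph{no} inessential argument (plus the corresponding fact for constants), and your last two paragraphs establish the double inclusion essentially as the paper does: reduce to a representative with no inessential (or no essential) argument, observe that its unary sections are essential and hence in $C$ so that its decomposition only consults $\Pi_C$, and conversely match each pivotal pair of a $\Pi_C$-decomposable generator with a unary member of $C$ and use the uniqueness of the decomposition of unary functions to identify the section with that member. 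That half of your argument is sound.
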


\begin{proof}
Let $n\geqslant 1$ be an integer. We first show that $R_C^{n,k}=R_C^{n,j}$ for all $k,j\in [n]$. Let $(u,v)\in R_C^{n,k}$. Then there exists an $n$-ary function $f\in C$ and an $n$-tuple $\bfa\in X^n$ such that $(u,v)=(f(\bfa_k^1),f(\bfa_k^0))$. Let $\sigma\colon [n]\to [n]$ be the transposition $(jk)$ and let $\bfb$ be the $n$-tuple defined by $b_i=a_j$, if $i=k$, and $b_i=a_i$, otherwise. We then have
$$
(u,v) ~=~ (f(\bfa_k^1),f(\bfa_k^0)) ~=~ (f(\bfb_j^1\sigma),f(\bfb_j^0\sigma)) ~=~ (f_{\sigma}(\bfb_j^1),f_{\sigma}(\bfb_j^0)).
$$
By Lemma~\ref{lemma:Csig} we have $f_{\sigma}\in C$ and hence $(u,v)\in R_C^{n,j}$. The converse inclusion follows by symmetry and we can therefore set $R_C^n=R_C^{n,1}=\cdots =R_C^{n,n}$.

We now prove that $R_C^n\subseteq R_C^{m}$ for all $n,m\geqslant 1$. Assume first that $n<m$. Any $n$-ary function $f\in C$ is equivalent to an $m$-ary function $g$ obtained from $f$ by adding $m-n$ inessential arguments.\footnote{Formally, it suffices to set $g=f_{\iota}$, where $\iota\colon [n]\to [m] : k\mapsto k$. Then $f=g_{\sigma}$, where $\sigma\colon [m]\to [n]$ is an extension of $\iota$ to $[m]$.}
Thus $g\in C$ and, therefore, $R_C^n\subseteq R_C^{m}$. Assume now that $n>m$. The constant functions in $R_C^n$ are also in $R_C^m$ by condition (ii) of Definition~\ref{de:cu}. For every $\bfa\in X^{n-m}$, let $E_{\bfa}$ be the set of functions $g\colon X^m\to Y$ such that there exists a nonconstant $n$-ary function $f\in C$ such that
$$
g(x_1,\ldots,x_m) ~=~ f(a_1,\ldots,a_{n-m},x_1,\ldots,x_m)
$$
for every $\bfx\in X^m$ (up to equivalence, we may assume that the $n$-th argument of $f$ is essential). It follows that
\begin{equation}\label{eq:EcnRb}
R_C^n ~=~ R_C^{n,n} ~=~ \bigcup_{\bfa\in X^{n-m}}\{(g(\bfx_m^1),g(\bfx_m^0)):\bfx\in X^m,~g\in E_{\bfa}\}.
\end{equation}
Since every $g\in E_{\bfa}$ is an $m$-ary essential section of $f$, by Lemma~\ref{lemma:Csig} we have that $g\in C$. Therefore Eq.~(\ref{eq:EcnRb}) means that $R_C^n\subseteq R_C^{m}$.

Thus, we have proved that $R_C^{n,k}=R_C^n=R_C$, and hence $\Pi_C^{n,k}=\Pi_C$ for every integers $n\geqslant 1$ and $k\in [n]$. Let us now prove that $C=\Gamma_{\Pi_C}$.

Since $C\subseteq \Gamma_{\Pi}$, every nonconstant (resp.\ constant) function $f\in C$ is equivalent to a $\Pi$-decomposable function $g$ with no inessential (resp.\ no essential) argument. By condition (ii') of Definition~\ref{de:cu}, $g$ is $\Pi_C$-decomposable. Therefore, $C\subseteq \Gamma_{\Pi_C}$.

To show the converse inclusion, take $h\in \Gamma_{\Pi_C}$ of arity $n$ and let $g\colon X^m\to Y$ be a $\Pi_C$-decomposable function equivalent to $h$ with no inessential argument or no essential argument. If $k\in [m]$ and $\bfa\in X^m$, then $(g(\bfa_k^1),g(\bfa_k^0))\in R_C=R_C^{1,1}$. Thus, there exists a unary function $f\in C$ such that $(g(\bfa_k^1),g(\bfa_k^0))=(f(1),f(0))$. Hence
\begin{equation}\label{eq:s6d7ff}
g(\bfa_k^x) ~=~ \Pi_C(x,f(1),f(0))
\end{equation}
for every $x\in X$.

We have the following two exclusive cases to consider:
\begin{itemize}
\item Suppose that $f$ is a constant function. Since $f\in C\subseteq \Gamma_{\Pi}$, this function is equivalent to a $\Pi$-decomposable constant function $c$. We then have $c=\Pi(x,c,c)$ for every $x\in X$. Since $(c,c)\in R_C$, Eq.~(\ref{eq:s6d7ff}) reduces to $g(\bfa_k^x) = \Pi_C(x,c,c)=c$ for every $x\in X$. Therefore, the constant section $g(\bfa_k^x)$ is equivalent to a function $c$ in $C$.

\item Suppose that $f$ is a nonconstant function. Then $f$ is its own essential unary section. Since $f$ is in $C$, it is $\Pi_C$-decomposable. Therefore, the function defined by the right-hand side of Eq.~(\ref{eq:s6d7ff}) is exactly $f$ and is in $C$.
\end{itemize}
Thus, we have proved that $h$ is equivalent to a function $g$ whose every unary section is in $C$. Hence $g$, and so $h$, are in $C$.
\end{proof}

\begin{theorem}\label{thm:dsu_pivot}
Let $\Pi$ be a pivotal function. A nonempty subclass $C$ of $\Gamma_\Pi$ is UM-characterized if and only if it is pivotally characterized. Moreover, if any of these conditions holds, then $C=\Gamma_{\Pi_C}$.
\end{theorem}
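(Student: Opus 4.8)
The plan is to derive this theorem directly from the two results established just above it, namely Proposition~\ref{prop:pivotal_unary} and the preceding (unlabeled) proposition. As the remark preceding the theorem announces, almost all of the substantive work has already been done there, so what remains is essentially to assemble the pieces and to check one elementary point about constant functions.

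First I would prove the implication ``UM-characterized $\Rightarrow$ pivotally characterized'' together with the ``moreover'' statement, as these come as a single package. Suppose $C$ is a nonempty UM-characterized subclass of $\Gamma_{\Pi}$. The preceding proposition then applies verbatim and yields $C=\Gamma_{\Pi_C}$, where $\Pi_C$ is the restriction of $\Pi$ to $X\times R_C$; nonemptiness of $C$ guarantees that $R_C$, and hence the domain of $\Pi_C$, is nonempty. Since that proposition also gives $R_C=\bigcup_{f\in C}R_f$, the map $\Pi_C$ is a bona fide pivotal function for the members of $C$, so the equality $C=\Gamma_{\Pi_C}$ exhibits $C$ as a pivotally characterized class. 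This establishes the forward implication, and the same displayed equality is exactly the final ``moreover'' assertion.

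Next I would treat the converse implication ``pivotally characterized $\Rightarrow$ UM-characterized''. Suppose $C$ is pivotally characterized, say $C=\Gamma_{\Pi'}$ for some pivotal function $\Pi'$. I would verify that $\Gamma_{\Pi'}$ satisfies the two conditions of Definition~\ref{de:cu}. Condition (i) is precisely assertion (i) of Proposition~\ref{prop:pivotal_unary} applied to $\Pi'$. For condition (ii), let $f$ be a constant function with value $c$ lying in $\Gamma_{\Pi'}$, so that $\Pi'(p,c,c)=c$ for every $p\in X$ by Proposition~\ref{prop:pivotal_unary}(ii); if $g\equiv f$ then $g$ is again the constant function with value $c$, and applying Proposition~\ref{prop:pivotal_unary}(ii) in the other direction gives $g\in\Gamma_{\Pi'}$. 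Hence $C=\Gamma_{\Pi'}$ is UM-characterized.

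The only point requiring a little care is the elementary observation used in condition (ii), namely that any function equivalent to a constant function is itself constant with the same value; this holds because equivalence preserves the number of essential arguments and constant functions are exactly those with no essential argument. Beyond this, I expect no genuine obstacle: the real content—in particular the stabilization $R_C^{n,k}=R_C$ over all arities $n$ and positions $k$, which is what makes $\Pi_C$ a single well-defined pivotal function recovering $C$—is entirely absorbed into the preceding proposition. In this sense the theorem is really a corollary of the two propositions immediately before it.
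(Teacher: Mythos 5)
Your proposal is correct and follows exactly the route the paper intends: the forward implication and the ``moreover'' clause are read off from the unlabeled proposition preceding the theorem, and the converse is obtained by checking the two conditions of Definition~\ref{de:cu} via Proposition~\ref{prop:pivotal_unary}, together with the elementary observation that a function equivalent to a constant function is constant with the same value. The paper leaves these assembly details implicit, and you have filled them in correctly.
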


The following corollary immediately follows from Theorem~\ref{thm:dsu_pivot}.

\begin{corollary}\label{cor:dsu_pivot}
If $\Gamma_{\Pi'}\subseteq \Gamma_{\Pi}$ for pivotal functions $\Pi\colon D\to Y$ and $\Pi'\colon D'\to Y$, then $\Pi'=\Pi|_{D''}$, where $D''=X\times R_{\Gamma_{\Pi'}}$.
\end{corollary}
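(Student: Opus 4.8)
The plan is to apply Theorem~\ref{thm:dsu_pivot} to the class $C=\Gamma_{\Pi'}$ and then to upgrade the resulting equality of classes into a pointwise equality of pivotal functions by means of the uniqueness result of Proposition~\ref{prop:Uniq}. First I would check that $C=\Gamma_{\Pi'}$ is an admissible input for the theorem: it is a subclass of $\Gamma_{\Pi}$ by hypothesis, it is nonempty (the degenerate empty case collapses $D''$ to $\emptyset$ and makes the statement trivial under the convention noted below), and it is UM-characterized because every pivotally characterized class is UM-characterized by Proposition~\ref{prop:pivotal_unary}. The theorem then yields $\Gamma_{\Pi'}=\Gamma_{\Pi_C}$ with $\Pi_C=\Pi|_{D''}$ and $D''=X\times R_{\Gamma_{\Pi'}}$, which pins down $D''$ as exactly the domain on which the ambient function $\Pi$ is to be read and confirms that $\Pi|_{D''}$ re-generates the same class as $\Pi'$.

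It then remains to establish the genuine pointwise statement $\Pi'(p,u,v)=\Pi(p,u,v)$ for every $(p,u,v)\in D''$. To this end I would unfold $D''=X\times R_{\Gamma_{\Pi'}}$ and use that, by the notational convention $R_C=R_C^{1,1}$, the set $R_{\Gamma_{\Pi'}}$ consists precisely of the pairs $(f(1),f(0))$ where $f$ ranges over the \emph{unary} members of $\Gamma_{\Pi'}$. So I fix $(p,u,v)\in D''$ and write $(u,v)=(f(1),f(0))$ for such a unary $f\in\Gamma_{\Pi'}$, splitting the argument into two cases. If $f$ is nonconstant, then since it has a single essential and no inessential argument, the $\Pi'$-decomposable function with no inessential argument to which $f$ is equivalent must be $f$ itself; hence $f$ is $\Pi'$-decomposable, and as $f\in\Gamma_{\Pi'}\subseteq\Gamma_{\Pi}$ it is likewise $\Pi$-decomposable. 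Proposition~\ref{prop:Uniq} then forces $\Pi'$ and $\Pi$ to coincide on $X\times R_f$, which contains $(p,u,v)$ since $R_f=\{(f(1),f(0))\}$ for unary $f$. If instead $f$ is constant, say $f\equiv c$, then $(u,v)=(c,c)$ and Proposition~\ref{prop:pivotal_unary}(ii), applied in both $\Gamma_{\Pi'}$ and $\Gamma_{\Pi}$, gives $\Pi'(p,c,c)=c=\Pi(p,c,c)$. In either case $\Pi'(p,u,v)=\Pi(p,u,v)$, and letting $(p,u,v)$ range over $D''$ yields the claimed identity on $D''$.

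The hard part will be precisely this passage from the class equality $\Gamma_{\Pi'}=\Gamma_{\Pi|_{D''}}$ delivered by the theorem to the honest function equality, because the theorem by itself only asserts that the two pivotal functions generate the same class. Bridging the gap requires manufacturing, at each point of $D''$, a concrete function that is decomposable by \emph{both} $\Pi'$ and $\Pi$, so that Proposition~\ref{prop:Uniq} can be invoked; the unary members of $\Gamma_{\Pi'}$ are the natural witnesses, and this is where the subtle distinction between \emph{being in} $\Gamma_{\Pi'}$ and \emph{being} $\Pi'$-decomposable must be resolved (it resolves cleanly only for nonconstant unary functions, which are their own no-inessential-argument representatives). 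The delicate sub-case is that of the constant functions, since a constant $c$ is $\Pi'$-decomposable only when $\Pi'(\cdot,c,c)\equiv c$; this is exactly what membership in a pivotally characterized class guarantees through Proposition~\ref{prop:pivotal_unary}(ii), which is why the constant case must be peeled off and treated separately. Finally I would flag the domain convention: $\Pi'$ may a priori be defined on a set larger than $D''$, so the asserted identity $\Pi'=\Pi|_{D''}$ is to be read as equality on the essential domain $D''=X\times R_{\Gamma_{\Pi'}}$, outside of which the values of $\Pi'$ play no role in $\Gamma_{\Pi'}$.
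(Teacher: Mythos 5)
Your proof is correct. Note that the paper gives no proof body here at all: it simply asserts that the corollary ``immediately follows'' from Theorem~\ref{thm:dsu_pivot}. Your write-up makes explicit exactly what that ellipsis hides, and you have put your finger on the right gap: applying the theorem to $C=\Gamma_{\Pi'}$ (admissible since it is pivotally characterized, hence UM-characterized by Proposition~\ref{prop:pivotal_unary}) only delivers the \emph{class} equality $\Gamma_{\Pi'}=\Gamma_{\Pi_C}$, whereas the corollary asserts a \emph{pointwise} equality of pivotal functions on $D''=X\times R_{\Gamma_{\Pi'}}$. Your bridge --- taking for each $(u,v)\in R_{\Gamma_{\Pi'}}$ a unary witness $f\in\Gamma_{\Pi'}$ with $(u,v)=(f(1),f(0))$, observing that a nonconstant unary member of $\Gamma_{\Pi'}\subseteq\Gamma_{\Pi}$ is literally both $\Pi'$- and $\Pi$-decomposable (so Proposition~\ref{prop:Uniq} applies on $X\times R_f=X\times\{(u,v)\}$), and handling constants via Proposition~\ref{prop:pivotal_unary}(ii) --- is exactly the argument needed, and as you observe it actually makes the pointwise statement independent of the theorem itself. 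Your closing caveats (the empty class, and reading $\Pi'=\Pi|_{D''}$ as equality on the essential domain $D''$ since $D'$ may a priori be larger) are legitimate reading conventions that the paper leaves implicit. No gaps.
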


It is sometimes possible to provide additional information about the pivotal function that characterizes a pivotally characterized subclass of a given pivotally characterized class. The next proposition and its corollary illustrate this observation.

\begin{proposition}\label{prop:q5w76e5}
Let $\Pi$ be a pivotal function and let $C$ be a pivotally characterized subclass of $\Gamma_{\Pi}$. Suppose that there exist functions $g,h\colon Y^2\to Y$ such that
\begin{enumerate}
\item[(i)] $(g(u,v),h(u,v))\in R_{\Gamma_{\Pi}}$ for all $(u,v)\in Y^2$,

\item[(ii)] $(g(u,v),h(u,v))=(u,v)$ if and only if $(u,v)\in R_C$.
\end{enumerate}
Then we have $C=\Gamma_{\Pi'}$, where $\Pi'\colon X\times Y^2\to Y$ is defined by $\Pi'(p,u,v)=\Pi(p,g(u,v),h(u,v))$.
\end{proposition}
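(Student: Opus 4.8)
The plan is to reduce everything to the pivotal function $\Pi_C$ furnished by Theorem~\ref{thm:dsu_pivot}. Since $C$ is a pivotally characterized subclass of $\Gamma_\Pi$, that theorem gives $C=\Gamma_{\Pi_C}$, where $\Pi_C$ is the restriction of $\Pi$ to $X\times R_C$. Thus it suffices to prove $\Gamma_{\Pi_C}=\Gamma_{\Pi'}$. The observation driving both inclusions is that $\Pi'$ and $\Pi_C$ agree on $X\times R_C$: for $(u,v)\in R_C$ condition~(ii) yields $(g(u,v),h(u,v))=(u,v)$, whence $\Pi'(p,u,v)=\Pi(p,g(u,v),h(u,v))=\Pi(p,u,v)=\Pi_C(p,u,v)$ for every $p\in X$. (Condition~(i) also guarantees that $\Pi'$ is well defined on all of $X\times Y^2$, since $(g(u,v),h(u,v))\in R_{\Gamma_\Pi}$ lies in the domain of $\Pi$.)

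For the inclusion $C\subseteq\Gamma_{\Pi'}$, I would take $f\in C=\Gamma_{\Pi_C}$ and, using that both classes are closed under equivalence, replace $f$ by its representative, so that $f$ is itself $\Pi_C$-decomposable with no essential or no inessential argument. Every pivot pair of $f$ satisfies $(f(\bfx_k^1),f(\bfx_k^0))\in R_f\subseteq R_C$, so on these pairs $\Pi_C$ may be replaced by $\Pi'$; hence $f(\bfx)=\Pi_C(x_k,f(\bfx_k^1),f(\bfx_k^0))=\Pi'(x_k,f(\bfx_k^1),f(\bfx_k^0))$ for all $\bfx$ and $k$, so $f$ is $\Pi'$-decomposable and $f\in\Gamma_{\Pi'}$.

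For the reverse inclusion $\Gamma_{\Pi'}\subseteq C$, take $F\in\Gamma_{\Pi'}$ and assume $F$ is $\Pi'$-decomposable with no essential or no inessential argument. The crux is to show $R_F\subseteq R_C$; once this holds, the agreement of $\Pi'$ and $\Pi_C$ on $X\times R_C$ shows that $F$ is $\Pi_C$-decomposable, hence $F\in\Gamma_{\Pi_C}=C$. To prove $R_F\subseteq R_C$, fix $(u,v)\in R_F$. On one hand, Fact~\ref{fact:FcEq} applied to $F$ and $\Pi'$ gives $\Pi'(1,u,v)=u$ and $\Pi'(0,u,v)=v$. On the other hand, $\Pi'(1,u,v)=\Pi(1,g(u,v),h(u,v))$ and, by condition~(i), $(g(u,v),h(u,v))\in R_{\Gamma_\Pi}$, so Fact~\ref{fact:FcEq} applied to $\Pi$ gives $\Pi(1,g(u,v),h(u,v))=g(u,v)$ and likewise $\Pi(0,g(u,v),h(u,v))=h(u,v)$. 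Comparing the two computations forces $g(u,v)=u$ and $h(u,v)=v$, so condition~(ii) yields $(u,v)\in R_C$, as required.

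The step I expect to require the most care is the application of Fact~\ref{fact:FcEq} to $\Pi$ at the pairs $(g(u,v),h(u,v))\in R_{\Gamma_\Pi}$: the Fact is stated for pairs in $R_{f_0}$ of a genuinely $\Pi$-decomposable $f_0$, whereas members of $\Gamma_\Pi$ are only $\Pi$-decomposable up to equivalence. This is handled by recalling that $R_{\Gamma_\Pi}=R_{\Gamma_\Pi}^{1,1}$, so every pair in $R_{\Gamma_\Pi}$ has the form $(\phi(1),\phi(0))$ for some unary $\phi\in\Gamma_\Pi$; a nonconstant such $\phi$ is itself $\Pi$-decomposable and Fact~\ref{fact:FcEq} applies directly, while a constant $\phi\equiv c$ is covered by Proposition~\ref{prop:pivotal_unary}(ii), which gives $\Pi(p,c,c)=c$ for all $p\in X$. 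With this point settled, the two inclusions above combine to give $C=\Gamma_{\Pi_C}=\Gamma_{\Pi'}$.
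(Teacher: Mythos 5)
Your proof is correct and follows essentially the same route as the paper's: both inclusions hinge on the same two observations, namely that condition~(ii) makes $\Pi'$ agree with $\Pi$ on pivot pairs of members of $C$, and that combining condition~(i) with Fact~\ref{fact:FcEq} forces $g(u,v)=u$ and $h(u,v)=v$ on pivot pairs of a $\Pi'$-decomposable representative, whence (ii) places those pairs in $R_C$. The only (harmless) difference is at the very end of the reverse inclusion, where you conclude membership in $C$ by checking $\Pi_C$-decomposability and invoking $C=\Gamma_{\Pi_C}$ from Theorem~\ref{thm:dsu_pivot}, whereas the paper argues directly that the essential unary sections of the representative lie in $C$ via Fact~\ref{rem:bij} and condition~(i) of Definition~\ref{de:cu}; your explicit treatment of why Fact~\ref{fact:FcEq} applies to pairs of $R_{\Gamma_\Pi}$ (splitting into nonconstant and constant unary witnesses) is a point the paper leaves implicit.
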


\begin{proof}
Let us prove that $C\subseteq \Gamma_{\Pi'}$. Let $e\in C\subseteq \Gamma_{\Pi}$ and let $f\colon X^n\to Y$ be a $\Pi$-decomposable function with no essential argument or no inessential argument and equivalent to $e$. By condition (ii) we have
$$
f(\bfx) ~=~ \Pi(x_k,f(\bfx_k^1),f(\bfx_k^0)) ~=~ \Pi'(x_k,f(\bfx_k^1),f(\bfx_k^0)){\,},\qquad\bfx\in X^n,~k\in [n],
$$
which shows that $f$ is $\Pi'$-decomposable and hence that $e\in \Gamma_{\Pi'}$.

To see that $\Gamma_{\Pi'}\subseteq C$, take $e\in \Gamma_{\Pi'}$ and let $f\colon X^n\to Y$ be a $\Pi'$-decomposable function with no essential argument or no inessential argument and equivalent to $e$. We then have
\begin{equation}\label{eq:fs5f7f}
f(\bfx) ~=~ \Pi\big(p,g(f(\bfx_k^1),f(\bfx_k^0)),h(f(\bfx_k^1),f(\bfx_k^0))\big){\,},\qquad\bfx\in X^n,~k\in [n].
\end{equation}
Combining condition (i) and Fact~\ref{fact:FcEq}, we see that $f(\bfx_k^1)=g(f(\bfx_k^1),f(\bfx_k^0))$ and $f(\bfx_k^0)=h(f(\bfx_k^1),f(\bfx_k^0))$ for all $\bfx\in X^n$ and $k\in [n]$. It follows that Eq.~(\ref{eq:fs5f7f}) reduces to the condition that $f$ is $\Pi$-decomposable. Moreover, by condition (ii) we have $(f(\bfx_k^1),f(\bfx_k^0))\in R_C$ for all $\bfx\in X^n$ and $k\in [n]$. Therefore, combining Fact~\ref{rem:bij} and condition (i) of Definition~\ref{de:cu}, we have that $f$ and hence $e$ are in $C$.
\end{proof}

\begin{corollary}\label{cor:q5w76e5}
Assume that $U=\bigcup_{n\geqslant 1}\R^{[0,1]^n}$. Let $\Pi\colon [0,1]\times\R^2\to\R$ be a pivotal function such that $R_{\Gamma_{\Pi}}=\R^2$ and let $C$ be the class of functions $f$ of $\Gamma_{\Pi}$ such that $f(\bfx_k^0)\leqslant f(\bfx_k^1)$ for all $\bfx\in [0,1]^n$ and all integers $n\geqslant 1$ and $k\in [n]$. Then we have $C=\Gamma_{\Pi'}$, where $\Pi'(p,u,v)=\Pi(p,u\vee v,u\wedge v)$ on $[0,1]\times\R^2$.
\end{corollary}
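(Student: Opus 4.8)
The plan is to recognize Corollary~\ref{cor:q5w76e5} as the instance of Proposition~\ref{prop:q5w76e5} obtained by taking $g(u,v)=u\vee v$ and $h(u,v)=u\wedge v$, since then $\Pi'(p,u,v)=\Pi(p,g(u,v),h(u,v))=\Pi(p,u\vee v,u\wedge v)$ is exactly the pivotal function appearing in the statement. To invoke that proposition I must first check that $C$ is a \emph{pivotally characterized} subclass of $\Gamma_{\Pi}$, and then verify its two hypotheses (i) and (ii) for this choice of $g$ and $h$.

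Before anything else I would record a normalization coming from the assumption $R_{\Gamma_{\Pi}}=\R^2$: since every pair $(u,v)\in\R^2$ lies in $R_{\Gamma_{\Pi}}$, Fact~\ref{fact:FcEq} yields $\Pi(1,u,v)=u$ and $\Pi(0,u,v)=v$ for all $u,v\in\R$. Consequently, for fixed $u,v$ the unary function $f(x)=\Pi(x,u,v)$ is $\Pi$-decomposable and satisfies $(f(1),f(0))=(u,v)$; this will be my standard source of witnesses. Next I would show that $C$ is pivotally characterized by first proving it is UM-characterized and then invoking Theorem~\ref{thm:dsu_pivot}, noting that $C\neq\emptyset$ because $f(x)=\Pi(x,0,0)\in C$. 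For condition (i) of Definition~\ref{de:cu}, the key observation is that the defining inequality $f(\bfx_k^0)\leqslant f(\bfx_k^1)$ holds automatically on inessential arguments and, on an essential argument $k$, is nothing but $f_k^{\bfa}(0)\leqslant f_k^{\bfa}(1)$ for the corresponding essential unary section; combining this with Proposition~\ref{prop:pivotal_unary}(i), which governs membership in $\Gamma_{\Pi}$, shows that a nonconstant $f$ lies in $C$ if and only if all its essential unary sections do. Condition (ii) is immediate, since constants trivially satisfy the inequality and $\Gamma_{\Pi}$ is closed under $\equiv$.

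Finally I would verify the hypotheses of Proposition~\ref{prop:q5w76e5}. Hypothesis (i) is trivial because $R_{\Gamma_{\Pi}}=\R^2$ contains $(u\vee v,u\wedge v)$ for all $u,v$. For hypothesis (ii) I would compute both sides: $(u\vee v,u\wedge v)=(u,v)$ holds exactly when $u\geqslant v$, while $R_C=R_C^{1,1}=\{(u,v):u\geqslant v\}$. Here the inclusion $R_C\subseteq\{(u,v):u\geqslant v\}$ is immediate from the definition of $C$ applied to its unary members, and the reverse inclusion is supplied by the witness $f(x)=\Pi(x,u,v)\in C$ constructed above, which realizes an arbitrary pair with $u\geqslant v$ as a cofactor pair. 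With (i) and (ii) established, Proposition~\ref{prop:q5w76e5} gives $C=\Gamma_{\Pi'}$ with $\Pi'(p,u,v)=\Pi(p,u\vee v,u\wedge v)$, as desired.

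The only genuinely delicate point is the exact determination of $R_C$: proving $R_C=\{(u,v):u\geqslant v\}$ requires realizing \emph{every} admissible pair as a cofactor pair of a member of $C$, and this is precisely what forces the normalization step $\Pi(1,u,v)=u$, $\Pi(0,u,v)=v$ derived from $R_{\Gamma_{\Pi}}=\R^2$. Everything else reduces to routine bookkeeping built on Proposition~\ref{prop:pivotal_unary}, Theorem~\ref{thm:dsu_pivot}, and Proposition~\ref{prop:q5w76e5}.
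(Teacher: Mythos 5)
Your argument is correct and takes essentially the same route as the paper, whose proof is simply the two-step appeal to Theorem~\ref{thm:dsu_pivot} (to get that $C$ is pivotally characterized) followed by Proposition~\ref{prop:q5w76e5} with $g(u,v)=u\vee v$ and $h(u,v)=u\wedge v$. You merely make explicit the details the paper leaves to the reader: the UM-characterization of $C$, the nonemptiness, and the identification $R_C=\{(u,v):u\geqslant v\}$ via the witness $f(x)=\Pi(x,u,v)$, all of which check out.
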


\begin{proof}
By Theorem~\ref{thm:dsu_pivot}, $C$ is pivotally characterized. The result then follows from Proposition~\ref{prop:q5w76e5}.
\end{proof}

Theorem~\ref{thm:dsu_pivot} is also useful to show that the family $\umc$ (see Definition~\ref{de:cu}) can be endowed with a structure of a complete and atomic Boolean algebra.

For any subclass $V\subseteq U$, we denote by $C_V$ the class of the functions whose essential unary sections are in $V$ or that are equivalent to a constant function in $V$.

\begin{theorem}
Let $\mathcal{UMC}=\langle\umc,\vee,\wedge,0,1\rangle$ be the algebra of type $(2,2,0,0)$ defined by $C\wedge D=C\cap D$, $C\vee D=\bigcap\{E\in \umc : E\supseteq C\cup D\}$, $0=\emptyset$ and $1=U$.  The algebra $\mathcal{UMC}$ is a complemented distributive lattice, hence, a Boolean algebra. Moreover, it is complete and atomic. Furthermore, for any pivotal function $\Pi$, the set of subclasses of $\Gamma_{\Pi}$ that are empty or pivotally characterized is equal to the downset generated by $\Gamma_{\Pi}$ in $\mathcal{UMC}$.
\end{theorem}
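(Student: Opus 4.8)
The plan is to realize $\mathcal{UMC}$ as a power-set algebra, so that every algebraic claim follows at once, and then to deduce the last assertion directly from Theorem~\ref{thm:dsu_pivot}. First I would check that $\umc$ is a closure system on $U$: it contains $U$ and $\emptyset$ (both satisfy conditions (i) and (ii) of Definition~\ref{de:cu} trivially), and it is closed under arbitrary intersections. The latter is routine: if each $C_i$ is UM-characterized and $C=\bigcap_i C_i$, then closure under $\equiv$ (condition (ii$'$)) passes to $C$, and a nonconstant $f$ lies in $C$ iff it lies in every $C_i$ iff its essential unary sections lie in every $C_i$ iff they lie in $C$. Being a closure system, $\umc$ is a complete lattice whose infimum is intersection and whose supremum is the intersection of all members containing the union; in particular the operations $\wedge$, $\vee$, $0=\emptyset$, $1=U$ of the statement are exactly these lattice operations, and $\mathcal{UMC}$ is complete.

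The key step is to encode the members of $\umc$ by subsets of a fixed set. Let $N\subseteq Y^X$ be the set of nonconstant unary functions and put $W=Y\sqcup N$. To each $C\in\umc$ I associate $\Phi(C)=P_C\sqcup Q_C$, where $P_C=\{c\in Y:\text{the constant function of value }c\text{ is in }C\}$ and $Q_C=C\cap N$. By condition (ii) of Definition~\ref{de:cu} the membership of a constant function depends only on its value, and by condition (i) a nonconstant $f$ is in $C$ iff each of its essential unary sections is in $C$; since every such section is unary, hence either a constant function (membership governed by $P_C$) or a member of $N$ (membership governed by $Q_C$), the class $C$ is completely determined by the pair $(P_C,Q_C)$. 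Conversely, every pair $(P,Q)\in 2^Y\times 2^N$ is realized: define $C_{P,Q}$ by declaring a constant function to be a member iff its value lies in $P$, and a nonconstant $f$ to be a member iff each essential unary section of $f$ that is constant has value in $P$ and each one that is nonconstant lies in $Q$; one checks that $C_{P,Q}$ satisfies (i), (ii) and is closed under $\equiv$ (using Lemma~\ref{lemma:d4fds}). Thus $\Phi$ is a bijection, and plainly $C\subseteq D$ iff $P_C\subseteq P_D$ and $Q_C\subseteq Q_D$, so $\Phi$ is an order isomorphism of $\mathcal{UMC}$ onto the power-set lattice $2^W$.

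Since an order isomorphism preserves all existing infima and suprema, $\Phi$ carries $\cap$ to $\cap$, $\vee$ to $\cup$, and $\emptyset,U$ to $\emptyset,W$; hence $\mathcal{UMC}\cong\langle 2^W,\cup,\cap,\emptyset,W\rangle$, which is a complete atomic Boolean algebra. This yields distributivity, complementation, completeness and atomicity simultaneously, the atoms being the $\Phi$-preimages of the singletons $\{w\}$, $w\in W$.

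Finally, for the last assertion, $\Gamma_{\Pi}\in\umc$ by Proposition~\ref{prop:pivotal_unary}, and its downset in $\mathcal{UMC}$ is $\{D\in\umc:D\subseteq\Gamma_{\Pi}\}$. If such a $D$ is nonempty it is a nonempty UM-characterized subclass of $\Gamma_{\Pi}$, hence pivotally characterized by Theorem~\ref{thm:dsu_pivot}, while $\emptyset$ is empty; conversely, any empty or pivotally characterized subclass of $\Gamma_{\Pi}$ lies in $\umc$ (pivotally characterized implies UM-characterized) and below $\Gamma_{\Pi}$, hence in the downset. This gives the stated equality. I expect the main obstacle to be the encoding step of the second paragraph: verifying that a UM-characterized class is faithfully determined by its constant values together with its nonconstant unary members, taking care that essential unary sections may themselves be constant and that $C_{P,Q}$ is genuinely closed under equivalence.
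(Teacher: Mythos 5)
Your proposal is correct, and it reorganizes the argument in a way that differs from the paper's proof of this theorem. The paper proceeds directly: it first observes (as you do) that $\umc$ is a closure system, so that arbitrary infima are intersections and suprema are intersections of upper bounds; it then asserts distributivity from the definitions and, for complementation, explicitly constructs $A^*$ as the class of functions that are either constant and not equivalent to an element of $A$ or whose every essential unary section lies in $U\setminus A$, verifying $A^*\in\umc$, $A\wedge A^*=\varnothing$ and $A\vee A^*=U$; atomicity is read off from the atoms $d/{\equiv}$ and $C_{\{f\}}$. You instead build the order isomorphism $C\mapsto (P_C,Q_C)$ onto the power set of $Y\sqcup N$ (which, identifying each $c\in Y$ with the constant unary function of value $c$, is exactly the set $Y^X$ of unary functions) and import distributivity, complementation, completeness and atomicity wholesale from $2^W$. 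This is essentially proving Corollary~\ref{cor:cus_iso} first and deducing the theorem from it, whereas the paper derives the corollary as an afterthought of the theorem; your route is arguably cleaner, since the only nontrivial verification is the one you flag, namely that a UM-characterized class is faithfully determined by its unary members (here one must use that essential unary sections may be constant, that all constant functions of a given value are mutually equivalent, and Lemma~\ref{lemma:d4fds} to see that $C_{P,Q}$ is $\equiv$-saturated), while the paper's route requires separate checks of distributivity and of $A^*\in\umc$ that amount to the same bookkeeping. Your treatment of the final assertion --- nonempty members of the downset of $\Gamma_\Pi$ are exactly the nonempty UM-characterized, hence pivotally characterized, subclasses by Theorem~\ref{thm:dsu_pivot} and Proposition~\ref{prop:pivotal_unary} --- coincides with the paper's one-line justification.
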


\begin{proof}
Let us order $\umc$ by inclusion. Clearly, every family $\{A_i : i \in I\}$ of elements of $\umc$ has an \emph{infimum} given by $\bigcap\{A_i : i \in I\}$. Moreover, since $U$ is an element of $\umc$, the class $\bigcap\{E \in \umc : E\supseteq \bigcup_{i \in I}A_i\}$ is the \emph{supremum} $\bigvee_{i \in I} A_i$ of the family $\{A_i : i \in I\}$. Note that $\bigvee_{i \in I} A_i$ contains $f$ if and only if either $f$ is a nonconstant function whose essential unary sections are in $\bigcup_{i \in I} A_i$ or $f$ is a constant function that is in $\bigcup_{i \in I} A_i$.

Distributivity follows directly from the definitions.

For any $A\in \umc$ we denote by $A^*$ the set of the functions that are either constant and not equivalent to an element of $A$ or whose every essential unary section is in $U \setminus A$. Then (i) the essential unary sections of elements of $A^*$ are in $U\setminus A$ and (ii) any nonconstant unary function of $U\setminus A$ is in $A^*$. It follows that $A^*$ is in $\umc$. Indeed, since the case of constant functions is trivial, it suffices to prove that a nonconstant function $f$ is in $A^*$ if and only if every of its essential unary sections  is in $A^*$. First assume that $f$ is in $A^*$. By (i) and (ii), its essential unary sections are in $A^*$. Conversely, if any essential unary section of $f$ is in $A^*$, then by (i) and the definition of $A^*$ we see that $f$ is in $A^*$.

Clearly, $A \wedge A^*=\varnothing$. By construction, we also have $A \vee A^*=U$.

Moreover, $\mathcal{UMC}$ is easily seen to be atomic if we note that its atoms are exactly the classes $d/{\equiv}$ (where $d$ is a constant function) and $C_{\{f\}}$ (where $f$ is a nonconstant unary function).

The last statement is a direct consequence of Theorem~\ref{thm:dsu_pivot}.
\end{proof}

\begin{corollary}\label{cor:cus_iso}
The map $\psi\colon 2^{Y^X}\to\umc :V\mapsto C_V$ is an isomorphism of Boolean algebras.
\end{corollary}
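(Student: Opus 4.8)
The plan is to deduce the corollary from the preceding theorem, which establishes that $\mathcal{UMC}$ is a complete atomic Boolean algebra whose atoms are exactly the classes $d/{\equiv}$ (with $d$ constant) and $C_{\{f\}}$ (with $f$ a nonconstant unary function). The guiding principle is the standard fact that every complete atomic Boolean algebra $B$ is isomorphic to the power set of its set of atoms, the isomorphism sending a set $S$ of atoms to its join $\bigvee S$ and its inverse sending $b\in B$ to the set of atoms below $b$. So I would first recast the set of atoms in a uniform way indexed by $Y^X$, and then identify $\psi$ with this canonical isomorphism.

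First I would observe that the two families of atoms can be written uniformly as $C_{\{g\}}$ with $g$ ranging over the unary functions in $Y^X$. Indeed, for a nonconstant unary $g$ this is the definition, while for the constant unary function $d_c$ of value $c$ one checks directly from the definition of $C_V$ that $C_{\{d_c\}}$ is precisely the class of all constant functions of value $c$, that is, the atom $d/{\equiv}$ for $d$ constant of value $c$. Hence $g\mapsto C_{\{g\}}$ is a surjection from $Y^X$ onto the set of atoms; its injectivity follows from the elementary identity $C_V\cap Y^X=V$ (a nonconstant unary function lies in $C_V$ iff it lies in $V$, and a constant unary function $d_c$ lies in $C_V$ iff $d_c\in V$) applied to singletons. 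This bijection induces, in the usual way, a Boolean-algebra isomorphism $2^{Y^X}\to 2^{\mathrm{At}(\mathcal{UMC})}$, namely $V\mapsto\{C_{\{g\}}:g\in V\}$.

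It then remains to check that composing this bijection with the canonical isomorphism $2^{\mathrm{At}(\mathcal{UMC})}\to\mathcal{UMC}$ yields exactly $\psi$, that is, that
$$
C_V ~=~ \bigvee_{g\in V} C_{\{g\}}\qquad\text{for every } V\subseteq Y^X .
$$
For this I would use the explicit description of joins recorded in the proof of the preceding theorem: $\bigvee_{g\in V}C_{\{g\}}$ contains a nonconstant $f$ iff every essential unary section of $f$ lies in $\bigcup_{g\in V}C_{\{g\}}$, and contains a constant $f$ iff $f\in\bigcup_{g\in V}C_{\{g\}}$. A short case analysis, splitting a unary function according to whether it is constant, shows that a unary function lies in $\bigcup_{g\in V}C_{\{g\}}$ exactly when it lies in $V$, and that a constant function of value $c$ lies there exactly when $d_c\in V$; matching these two conditions against the definition of $C_V$ gives the asserted equality. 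Since $\psi$ thereby coincides with a composite of Boolean-algebra isomorphisms, it is itself such an isomorphism.

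The main obstacle I anticipate is precisely this last equality $C_V=\bigvee_{g\in V}C_{\{g\}}$, and more specifically the bookkeeping around essential unary sections: an essential unary section of a nonconstant function may itself be constant, so the identification ``unary function in the union $\Leftrightarrow$ unary function in $V$'' has to be verified in both the constant and the nonconstant cases and then reconciled with the two disjuncts appearing in the definition of $C_V$. Everything else reduces either to the standard power-set-of-atoms isomorphism or to the routine verification that $g\mapsto C_{\{g\}}$ is a bijection onto the atoms.
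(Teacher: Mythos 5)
Your proof is correct and follows exactly the route the paper intends: the corollary is stated without proof as an immediate consequence of the preceding theorem, whose identification of $\mathcal{UMC}$ as a complete atomic Boolean algebra with atoms $d/{\equiv}$ and $C_{\{f\}}$ is precisely the standard power-set-of-atoms argument you carry out. Your supporting verifications (the identity $C_V\cap Y^X=V$, the identification $C_{\{d_c\}}=d_c/{\equiv}$, and the equality $C_V=\bigvee_{g\in V}C_{\{g\}}$ via the explicit join description) are all sound and simply make explicit what the paper leaves implicit.
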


Applying Corollary~\ref{cor:cus_iso} to the special case where $X=Y=\{0,1\}$, we obtain that there are exactly 16 UM-characterized classes of Boolean functions. Each of these classes is of the form $C_V$ for a set of unary Boolean functions $V$. We provide the description of each of these 16 classes in Appendix~\ref{sec:app}.

\section{Componentwise pivotal decompositions}

In this section we generalize the concept of pivotal decomposition by allowing the pivotal functions to depend upon the label of the pivot variable. Let $X_1,\ldots,X_n$ and $Y$ be nonempty sets and, for every $k\in [n]$, let $0_k$ and $1_k$ be two fixed elements of $X_k$. When no confusion arises we simply denote $0_k$ and $1_k$ by $0$ and $1$, respectively. For every function $f\colon \prod_{i=1}^n X_i\to Y$, define $R_f^k=\{(f(\bfx_k^1),f(\bfx_k^0)):\bfx\in \prod_{i=1}^n X_i\}$.

\begin{definition}\label{de:CompPD}
We say that a function $f\colon \prod_{i=1}^n X_i\to Y$ \emph{admits a componentwise pivotal decomposition}, or is \emph{c-pivotally decomposable}, if there exist subsets $D_k$ of $X_k\times Y^2$ and functions $\Pi_k\colon D_k\to Y$, $k=1,\ldots,n$, called \emph{pivotal functions}, such that, for every $k\in [n]$, we have $D_k\supseteq X_k\times R_f^k$ and
\begin{equation}\label{eq:65q7eq67}
f(\bfx) ~=~ \Pi_k(x_k,f(\bfx_k^1),f(\bfx_k^0)){\,},\qquad \bfx\in \prod_{i=1}^n X_i.
\end{equation}
In this case we say that $f$ is $(\Pi_1,\ldots,\Pi_n)$-decomposable.
\end{definition}

Clearly, Facts~\ref{fact:FcEq} and \ref{fact:Restr01} and Proposition~\ref{prop:Uniq} can be easily extended to the case of c-pivotally decomposable functions. We also have the following fact, which is the counterpart of Fact~\ref{rem:bij}.

\begin{fact}\label{fact:NotCPD}
Eq.~(\ref{eq:65q7eq67}) exactly means that, for every fixed $\bfa,\bfb\in \prod_{i=1}^n X_i$ and $k\in [n]$, we have $f_k^{\bfa}=f_k^{\bfb}$ if and only if $(f(\bfa_k^1),f(\bfa_k^0))=(f(\bfb_k^1),f(\bfb_k^0))$.
\end{fact}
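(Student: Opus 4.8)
The plan is to read the phrase ``exactly means'' as a genuine equivalence---namely, that $f$ admits pivotal functions $\Pi_1,\ldots,\Pi_n$ witnessing Eq.~(\ref{eq:65q7eq67}) \emph{if and only if} the stated condition on unary sections holds---and to prove the two implications separately. A useful preliminary observation is that one half of the inner ``if and only if'' is free and uses nothing about any decomposition: if $f_k^{\bfa}=f_k^{\bfb}$, then evaluating these two unary sections at $1$ and at $0$ immediately forces $(f(\bfa_k^1),f(\bfa_k^0))=(f(\bfb_k^1),f(\bfb_k^0))$. Hence, in each direction of the main equivalence, only the reverse half of the inner biconditional carries any content.

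For the implication that Eq.~(\ref{eq:65q7eq67}) entails the condition, I would simply substitute the decomposition into the relevant sections, exactly as in Fact~\ref{rem:bij}. Assuming $f$ is $(\Pi_1,\ldots,\Pi_n)$-decomposable and that $(f(\bfa_k^1),f(\bfa_k^0))=(f(\bfb_k^1),f(\bfb_k^0))$, I would write, for every $x\in X_k$,
$$
f_k^{\bfa}(x) ~=~ \Pi_k(x,f(\bfa_k^1),f(\bfa_k^0)) ~=~ \Pi_k(x,f(\bfb_k^1),f(\bfb_k^0)) ~=~ f_k^{\bfb}(x),
$$
whence $f_k^{\bfa}=f_k^{\bfb}$. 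Together with the free half noted above, this yields the full inner equivalence.

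For the converse---that the section condition suffices to produce pivotal functions witnessing Eq.~(\ref{eq:65q7eq67})---the idea is to let the condition manufacture the $\Pi_k$. For each $k\in[n]$ I would define $\Pi_k\colon X_k\times R_f^k\to Y$ by $\Pi_k(p,u,v)=f(\bfa_k^p)$, where $\bfa$ is any tuple with $(f(\bfa_k^1),f(\bfa_k^0))=(u,v)$; such a tuple exists precisely because $(u,v)\in R_f^k$, and this domain is admissible since $D_k\supseteq X_k\times R_f^k$ is all that Definition~\ref{de:CompPD} requires. The only point needing care---and the single, if mild, obstacle---is that this assignment be independent of the chosen representative $\bfa$. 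This is exactly the reverse half of the hypothesis: if two tuples $\bfa,\bfb$ give the same pair $(f(\bfa_k^1),f(\bfa_k^0))=(f(\bfb_k^1),f(\bfb_k^0))$, then $f_k^{\bfa}=f_k^{\bfb}$, so $f(\bfa_k^p)=f(\bfb_k^p)$ for every $p\in X_k$, and $\Pi_k$ is well defined. I would then close the argument by taking $\bfa=\bfx$: for every $\bfx$ and $k$ one gets $\Pi_k(x_k,f(\bfx_k^1),f(\bfx_k^0))=f(\bfx_k^{x_k})=f(\bfx)$, which is Eq.~(\ref{eq:65q7eq67}). Since the construction mirrors the uniqueness argument of Proposition~\ref{prop:Uniq}, I expect no genuine difficulty beyond the well-definedness verification.
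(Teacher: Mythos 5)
Your proof is correct and is exactly the argument the paper leaves implicit (the Fact is stated without proof as an immediate observation): the forward direction is the substitution used for Fact~\ref{rem:bij}, and the converse is the well-definedness of the canonical pivotal functions $\Pi_k(p,u,v)=f(\bfa_k^p)$ on $X_k\times R_f^k$, mirroring Proposition~\ref{prop:Uniq}. Nothing is missing.
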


A function that is pivotally decomposable is clearly c-pivotally decomposable. The following example shows that there are c-pivotally decomposable functions that are not pivotally decomposable. There are also functions that are not c-pivotally decomposable.

\begin{example}\label{ex:LE}
The \emph{Lov\'asz extension} of a pseudo-Boolean function $f\colon \{0,1\}^n\to\R$ is the unique function $L_f\colon [0,1]^n\to\R$ of the form
$$
L_f(\bfx) ~=~ \sum_{S\subseteq [n]}a_S\,\bigwedge_{i\in S}x_i{\,},\qquad a_S\in\R{\,},
$$
that agrees with $f$ on $\{0,1\}^n$ (see, e.g., \cite{GraMarRou00} and the references therein). We then have
$$
f(\mathbf{1}_T)=\sum_{S\subseteq T}a_S\quad\mbox{and}\quad a_S=\sum_{T\subseteq S}(-1)^{|S|-|T|}\, f(\mathbf{1}_T).
$$


Every binary Lov\'asz extension $L_f\colon [0,1]^2\to\R$ is c-pivotally decomposable. 
Indeed, consider the binary Lov\'asz extension
$$
L_f(x_1,x_2) ~=~ a_0+a_1{\,}x_1+a_2{\,}x_2+a_{12}{\,}(x_1\wedge x_2)
$$
and construct $\Pi_1\colon [0,1]\times\R^2\to\R$ as follows (we construct $\Pi_2$ similarly). If $a_2\neq 0$, then
$$
\Pi_1(p,u,v) ~=~ a_0+a_1{\,}p+(v-a_0)+a_{12}{\,}\big(p\wedge\frac{v-a_0}{a_2}\big).
$$
If $a_2=0$ and $a_{12}\neq 0$, then
$$
\Pi_1(p,u,v) ~=~ a_0+a_1{\,}p+a_{12}{\,}\big(p\wedge\frac{u-a_0-a_1}{a_{12}}\big).
$$
If $a_2=0$ and $a_{12}=0$, then $\Pi_1(p,u,v)=a_0+a_1{\,}p$.

There are ternary Lov\'asz extensions $L_f\colon [0,1]^3\to\R$ that are not c-pivotally decomposable. Indeed, considering for instance $L_f(x_1,x_2,x_3) = x_1\wedge x_2+x_2\wedge x_3$ with $\bfa=(1/2,1/2,1/2)$ and $\bfb=(1/4,1/2,3/4)$, we have $a_2=1/2=b_2$, $L_f(\bfa_2^1)=1=L_f(\bfb_2^1)$, $L_f(\bfa_2^0)=0=L_f(\bfb_2^0)$, and $L_f(\bfa)=1\neq 3/4=L_f(\bfb)$. By Fact~\ref{fact:NotCPD}, this shows that $L_f$ is not c-pivotally decomposable.
\end{example}

The following two examples provide classes of functions that are c-pivotally decomposable but not necessarily pivotally decomposable.

\begin{example}\label{ex:psLP}
Let $X_1,\ldots,X_n$ and $Y$ be bounded distributive lattices, with $0$ and $1$ as bottom and top elements, respectively. A function $f\colon \prod_{i=1}^n X_i\to Y$ is of the form $f=g\circ (\phi_1,\ldots,\phi_n)$, where $g\colon Y^n\to Y$ is a lattice polynomial function and the $\phi_i\colon X_i\to Y$, $i=1,\ldots,n$, are unary functions such that $\phi_i(x)=\med(\phi_i(x),\phi_i(1),\phi_i(0))$ for every $x\in X_i$, if and only if it is $(\Pi_1,\ldots,\Pi_n)$-decomposable, where $\Pi_k\colon X_k\times Y^2\to Y$ is defined by $\Pi_k(p,u,v)=\med(\phi_k(p),u,v)$; see \cite{Cou2011}.
\end{example}

\begin{example}\label{ex:mBf}
A pseudo-Boolean function $f\colon \{0,1\}^n\to \R$ is \emph{monotone} if it is either isotone or antitone in each of its arguments. It can be easily seen \cite[Theorem~1]{CouMarWal12} that a pseudo-Boolean function is monotone if and only if it is of the form $f=g\circ (\phi_1,\ldots,\phi_n)$, where $g\colon [0,1]^n\to \R$ is a nondecreasing pseudo-Boolean function and each $\phi_k\colon \{0,1\}\to \{0,1\}$ is either the identity function $\phi_k=\id$ or the negation function $\phi_k=\neg$. Applying Example~\ref{ex:psLP} to the special case where $X_1=\cdots =X_n=\{0,1\}$ and $Y=\R$, we see that a pseudo-Boolean function is monotone if and only if it is $(\Pi_1,\ldots,\Pi_n)$-decomposable, where $\Pi_k\colon \{0,1\}^3\to\R$ is defined by $\Pi_k(p,u,v)=\med(\phi_k(p),u,v)$.
\end{example}

\section{Conclusions and further research}

In this paper we have introduced and investigated a generalization of the Shannon decomposition called pivotal decomposition. Considering the wide number of applications of the Shannon decomposition for Boolean functions, the concept of pivotal decomposition can prove to be a useful tool to study structural properties of classes of functions arising from various areas such as fuzzy system theory, fuzzy game theory, and aggregation function theory. We list a few ideas of possible applications or further investigations.
\begin{enumerate}
\item[(a)] Repeated applications of the Shannon decomposition lead to \emph{median normal forms} of monotone Boolean functions. It is known \cite{Couceiro2006,Cou11} that median normal form systems are of lower complexity than the disjunctive and conjunctive normal form systems. Similarly, the existence of a pivotal decomposition for a class of functions also leads to normal form representations. Comparing the complexity of these representations and designing  efficient algorithms to obtain them are two important problems that could be addressed to foster applications of pivotal decomposition.

\item[(b)] It is known \cite{Bioch2010} that the Shannon decomposition can be used as a tool to analyze the decomposability of a Boolean function. Recall that a Boolean function $f\colon\{0,1\}^n \to \{0,1\}$ is \emph{decomposable} \cite{Bioch2010} if there exists a partition $\{A_1, \ldots, A_\ell\}$ of $[n]$ and functions $F\colon \{0,1\}^\ell \to\{0,1\}$ and $g_i\colon \{0,1\}^{|A_i|} \to \{0,1\}$ for $i=1,\ldots,\ell$ such that
\[
f(\bfx)~=~ F(g(x_i)_{i\in A_1}, \ldots, g(x_i)_{i\in A_\ell}){\,}, \qquad \bfx\in \{0,1\}^n.
\]
Decomposability of Boolean functions corresponds to interpretable properties in applied areas such as game theory \cite{Shapley1967} and system reliability \cite{Birnbaum1965}. In various contexts, such as aggregation function theory, generalized versions of the decomposability property can reveal interesting structural properties of certain classes of functions. The existence of a pivotal decomposition could then ease the analysis of decomposability.

\item[(c)] It would be interesting to generalize Definition~\ref{de:345} by considering two pivots instead of one. Then the quest for functions that are pivotally decomposable with two pivots and not pivotally decomposable with one pivot could be an interesting question.

\item[(d)] Algebraic properties of UM-characterized classes of functions and connections with clone theory could also be investigated.
\end{enumerate}

\appendix
\section{UM-characterized classes of Boolean functions}\label{sec:app}

We use the following notation. For any $\bfa\in \{0,1\}^n$ we denote by $\chi_{\bfa}$ the characteristic function of $\bfa$, i.e., the Boolean function defined on $\{0,1\}^n$ by $\chi_\bfa(\bfx)=1$ if and only if $\bfx=\bfa$. In fact, $\chi_\bfa(\bfx)=\prod_{\{i:a_i=1\}}x_i$. The bottom element of $\{0,1\}^n$ is denoted by $\bold{0}_n$ or by $\bold{0}$ if no confusion arises and the top by $\bold{1}_n$ or by $\bold{1}$. We denote by $\bold{B}$ the class of the Boolean functions.

For any $f\colon\{0,1\}^n\to\{0,1\}$ and any $j\in [n]$, we denote by $\partial_j f$ and $\Delta_j f$ the \emph{$j$-th partial derivatives of $f$}, i.e., the functions defined by
\begin{eqnarray*}
&& \partial_j f:\{0,1\}^n \to \{0,1\}: \bold{x}\mapsto f(\bold{x}\oplus \bold{\delta}_j)\oplus f(\bold{x}){\,},\\
&& \Delta_j f:\{0,1\}^n \to \{-1,0,1\}: \bold{x}\mapsto f(\bold{x}_j^1)-f(\bold{x}_j^0){\,},
\end{eqnarray*}
where the map $\delta_j\in\{0,1\}^{[n]}$ is defined by $\bold{\delta}_j(k)=1$ if and only if $k=j$.

\begin{proposition}
Let us denote by $\ctop, \cbot, \id$ and $\neg$ the 4 unary Boolean functions defined according to their truth tables:
\[
\begin{array}{r|l|l|l|l}
 & \ctop & \cbot & \id & \neg\\ \hline
0 & 1 & 0 & 0 & 1\\
1 & 1 & 0 & 1 & 0.
\end{array}
\]
The 16 UM-characterized classes of Boolean functions can be described as follows:
\begin{enumerate}
\item[(1)] $C_{\{\varnothing\}} ~=~ \varnothing$
\item[(2)] $C_{\{\cbot, \ctop, \id, \neg\}} ~=~ \bold{B}$
\item[(3)] $C_{\{\cbot\}} ~=~ \cbot/{\equiv}$
\item[(4)] $C_{\{\ctop\}} ~=~ \ctop/{\equiv}$
\item[(5)] $C_{\{\id\}} ~=~ \id/{\equiv}$
\item[(6)] $C_{\{\neg\}} ~=~ \neg/{\equiv}$
\item[(7)] $C_{\{\cbot, \ctop\}} ~=~ \{\cbot,\ctop\}/{\equiv}$
\item[(8)] $C_{\{\cbot, \id\}} ~=~ \bigcup\{\chi_{\ctop_n}/{\equiv}: n\geqslant 1\}\cup\cbot/{\equiv}$
\item[(9)] $C_{\{\cbot, \neg\}} ~=~ \bigcup\{\chi_{\bold{0}_n}/{\equiv}: n\geqslant 1\}\cup\cbot/{\equiv}$
\item[(10)] $C_{\{\ctop, \id\}} ~=~ \bigcup\{\chi_{\bold{0}_n}/{\equiv}: n\geqslant 1\} \cup\ctop/{\equiv}$
\item[(11)] $C_{\{\ctop, \neg\}} ~=~ \bigcup\{\chi_{\bold{1}_n}/{\equiv}: n\geqslant 1\} \cup\ctop/{\equiv}$
\item[(12)] $C_{\{\id, \neg\}} ~=~ \{f : \forall j ~ (\partial_j f = \ctop ~\vee ~\partial_j f = \cbot)  \}$
\item[(13)] $C_{\{\cbot, \id, \neg\}} ~=~ \{f : \forall j ~ (\partial_j f \geqslant f ~\vee ~\partial_j f = \cbot)  \}$
\item[(14)] $C_{\{\ctop, \id, \neg\}} ~=~ \{f : \forall j ~ (\partial_j f \leqslant f ~\vee ~\partial_j f = \cbot)\}$
\item[(15)] $C_{\{0, 1, \id\}} ~=~ \{f : \forall j ~ \Delta_j f\geqslant 0\}$
\item[(16)] $C_{\{0, 1, \neg\}} ~=~ \{f : \forall j ~ \Delta_j f\leqslant 0\}$
\end{enumerate}
\end{proposition}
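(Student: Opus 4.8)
The plan is to leverage Corollary~\ref{cor:cus_iso}, which tells us that the map $V\mapsto C_V$ is a bijection (indeed a Boolean algebra isomorphism) from $2^{Y^X}$ onto $\umc$. Specialising to $X=Y=\{0,1\}$, the set $Y^X$ of unary Boolean functions has exactly the four elements $\cbot,\ctop,\id,\neg$, so $2^{Y^X}$ has $16$ elements. Hence there are exactly $16$ UM-characterized classes of Boolean functions, each of the form $C_V$ for a unique $V\subseteq\{\cbot,\ctop,\id,\neg\}$, and the whole task reduces to computing $C_V$ explicitly for each of these $16$ choices of $V$.

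For the computation I would first record the dictionary between unary sections and partial derivatives. For $f\colon\{0,1\}^n\to\{0,1\}$, a fixed $j\in[n]$, and a fixed $\bfa$, the section $f_j^{\bfa}$ is nonconstant exactly when $\partial_j f(\bfa)=1$, it equals $\id$ exactly when $\Delta_j f(\bfa)=1$, and it equals $\neg$ exactly when $\Delta_j f(\bfa)=-1$; when $\partial_j f(\bfa)=0$ the section is the constant $\ctop$ if $f(\bfa)=1$ and the constant $\cbot$ if $f(\bfa)=0$. Moreover the $j$-th argument is essential precisely when $\partial_j f\neq\cbot$. With this in hand the recipe for $C_V$ is uniform: a constant function lies in $C_V$ iff its value belongs to $V$, while a nonconstant $f$ lies in $C_V$ iff none of the unary functions in $\{\cbot,\ctop,\id,\neg\}\setminus V$ occurs as an essential section of $f$, that is, iff for every essential $j$ and every $\bfa$ the section $f_j^{\bfa}$ avoids the forbidden set.

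I would then dispose of the cases in increasing order of difficulty. The two extreme cases $V=\varnothing$ and $V=\{\cbot,\ctop,\id,\neg\}$ give $\varnothing$ and $\mathbf{B}$. When $V$ contains no nonconstant unary function ($V\subseteq\{\cbot,\ctop\}$) every essential section is forbidden, so only constant functions survive, yielding (3), (4), (7). When $V$ is a single nonconstant function, a short argument shows that two distinct essential arguments are incompatible with forbidding the three other unary functions, so $f$ is equivalent to $\id$ (resp.\ $\neg$), giving (5), (6). For the mixed pairs such as $\{\cbot,\id\}$ one observes that the two allowed sections agree in one of their two values (here $f(\bfa_j^0)=0$), which forces $f$ to vanish as soon as one essential coordinate is fixed appropriately; pushing this over all essential coordinates shows $f$ is, up to equivalence, an $n$-ary conjunction $\chi_{\ctop_n}$ (and dually a disjunction), which handles (8)--(11).

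The main obstacle, and the heart of the proof, is the analysis of the affine case $V=\{\id,\neg\}$ and of the four triples (12)--(16). Here one must convert the combinatorial statement ``no essential section equals the forbidden unary function'' into the stated pointwise (in)equality on $\partial_j f$ or $\Delta_j f$. For $\{\id,\neg\}$ both constants are forbidden, so every essential section must be nonconstant, i.e.\ $\partial_j f=\ctop$ on essential arguments and $\partial_j f=\cbot$ on the others, giving (12). For the triples one forbids a single unary function: forbidding $\neg$ (resp.\ $\id$) is exactly $\Delta_j f\geqslant 0$ (resp.\ $\Delta_j f\leqslant 0$) on essential arguments, giving the monotone classes (15), (16); forbidding $\ctop$ (resp.\ $\cbot$) means the constant section $\ctop$ (resp.\ $\cbot$) never occurs, which by the dictionary reads $\partial_j f\geqslant f$ (resp.\ the dual condition) on essential arguments, giving (13), (14). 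The delicate bookkeeping is twofold: one must remember that sections along \emph{inessential} arguments are not required to avoid the forbidden set (which is why each stated condition carries the alternative $\partial_j f=\cbot$), and one must track, for each $V$, exactly which of the two constant functions belongs to $V$ so as to include or exclude them correctly. Once these two points are handled with care, each of the sixteen descriptions follows by reading off the dictionary.
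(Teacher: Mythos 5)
Your proposal is correct and follows essentially the same route as the paper: it enumerates the $16$ classes via Corollary~\ref{cor:cus_iso} and then treats them case by case using the same correspondence between essential unary sections, essential arguments, and the derivatives $\partial_j f$ and $\Delta_j f$. The specific arguments you sketch (only constants survive when $V\subseteq\{\cbot,\ctop\}$, the incompatibility of two essential arguments for $V=\{\id\}$ or $V=\{\neg\}$, the vanishing of $f$ when an essential coordinate is set appropriately for the mixed pairs, and the translation of the forbidden-section conditions into the stated inequalities for (12)--(16)) coincide with those in the paper's proof.
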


\begin{proof}
(1), (2), (3), and (4) are trivial.

(5) We have to prove that $C_{\{\id\}}\subseteq \id/{\equiv}$. First note that $C_{\{\id\}}$ does not contain any constant function (such a function would be equivalent to a unary constant function of $C_{\id}$ which does not contain any unary constant function). Then, let $f\colon\{0,1\}^n \to\{0,1\}$ be an element of $C_{\id}$ and assume that the $k$-th argument of $f$ is essential. It follows that $f_k^{\bfa}=\id$ for every $\bfa \in \{0,1\}^n$. If $j\neq k$, it follows that, for every $\bfa \in \{0,1\}^n$ we have
\[
f_{j}^{\bfa_k^0}=\cbot \quad \mbox{and} \quad f_{j}^{\bfa_k^1}=\ctop{\,},
\]
which means that the $j$-th argument of $f$ is inessential.  Hence the function $f$ is equivalent to the identity function.

(6) is obtained similarly as in (5).

(7) We have to prove that $C_{\{\cbot, \ctop\}}\subseteq \{\cbot,\ctop\}/{\equiv}$. Since $\{\cbot,\ctop\} \subseteq \{\cbot, \ctop\}/{\equiv}$ it suffices to prove that $C_{\{\cbot, \ctop\}}$ does not contain any nonconstant function. Assume that $f\colon\{0,1\}^n \to\{0,1\}$ is an element of $C_{\{\cbot,\ctop\}}$ whose $k$-th argument is essential. Then, for every $\bfa\in\{0,1\}^n$, the section $f_k^{\bfa}$ is in $\{\cbot, \ctop\}$ and hence is a constant function, a contradiction.

(8) By definition $\cbot\in C_{\{\cbot,\id\}}$. The function $f=\chi_{\ctop_n}$ is in $C_{\{\cbot,\id\}}$ for every $n \geqslant 1$ since for every $k\in [n]$ and every $\bfa \in \{0,1\}^n$ the unary section $f_k^\bfa$ is the zero function if there is a $j \neq k$ such that $a_j=0$ and $f_k^\bfa$ is the identity function otherwise. From the fact that $C_{\{\cbot,\id\}}$ is $\equiv$-saturated, we deduce that $\bigcup\{\chi_{\ctop_n}/{\equiv}: n\geqslant 1\}\cup \cbot/{\equiv}\subseteq C_{\{\cbot,\id\}}$.

Let us prove the converse inclusion. We prove that if the $k$-th and $j$-th arguments ($j \neq k$) of an element $f\colon\{0,1\}^n \to\{0,1\}$ of $C_{\{\cbot,\id\}}$ are essential then $f_k^{\bfa}=0$ for every $\bfa\in \{0,1\}^n$ such that $a_j=0$. Indeed, if $f_k^{\bfa}=\id$ then $f(\bfa^1_k)=f(\bfa^{10}_{kj})=1$. It follows that if $\bold{b}=\bfa_k^1$, then $f_j^{b} \in \{\ctop, \neg\}$ and $f$ cannot be in $C_{\{\cbot, \id\}}$.

Hence $f(\bfa)$ vanishes as soon as there is an essential argument of $f$ that is set to $0$. Then, if $f(\ctop)=0$, the function $f$ is in $\cbot/{\equiv}$, and if $f(\ctop)=1$, it is in $\chi_{\ctop_n}/{\equiv}$.

(9) We proceed similarly as in (8). In this case, if $f$ is in $C_{\{\cbot,\neg\}}$ and if the $k$-th and $j$-th arguments of $f$ (with $k \neq j$) are essential then $f_k^\bfa=0$ if $a_j=1$.

(10) is obtained from (8) by duality.

(11) is obtained from (9) by duality.

For (12), (13), and (14) we first note that the $j$-th argument of $f\colon\{0,1\}^n\to\{0,1\}$ is inessential if and only if $\partial_j f=\cbot$.

(12) is easy if we note that $f$ is in $C_{\{\id,\neg\}}$ if and only if, for every essential argument $j$ of $f$, we have $\partial_j f=\ctop$.

(13) Assume that $f\colon\{0,1\}^n \rightarrow \{0,1\}$ is in $\{f : \forall j ~(\partial_j f \geqslant f \vee \partial_j f=\cbot)\}$. If $j\in [n]$ is such that $\partial_j f \geqslant f$ then for every $\bfa \in \{0,1\}^n$ it follows that if $f(\bfa_j^0)=1$ then $f(\bfa_j^1)=0$ and if $f(\bfa_j^1)=1$ then $f(\bfa_j^0)=0$. Hence, any essential unary section of $f$ can be any unary Boolean function but $\ctop$.

Conversely, assume that $f\colon\{0,1\}^n \to\{0,1\}$ is in $C_{\{\cbot, \id, \neg\}}$. If the $k$-th argument ($k\in [n]$) of $f$ is essential then for every $\bfa \in \{0,1\}^n$ the function $f_k^{\bfa}$ cannot be equal to $\ctop$. It means that if $f(\bfa_k^0)=1$ then $f(\bfa_k^1)=0$ and if $f(\bfa_k^1)=1$ then $f(\bfa_k^0)=0$, which proves that $\partial_j f \geqslant f$.

(14) is obtained from (13) by duality.

(15) and (16) are examples that have already been considered (these are the set of the nondecreasing functions and the set of the nonincreasing functions, respectively).
\end{proof}

\end{document}